
\documentclass[11pt,a4paper,twoside,leqno]{amsart}
\setlength{\topmargin}{-.5in}
 \setlength{\textheight}{215mm}
 \setlength{\textwidth}{142mm}
\evensidemargin 0.3in
\oddsidemargin 0.3in
\setcounter{page}{1}
\usepackage{amsmath}
\usepackage{amsfonts}
\usepackage{amssymb}
\usepackage{amstext}
\usepackage{amsbsy}

\usepackage{graphicx}

\usepackage{epsf}
\openup 3pt

\hfuzz1pc 


\theoremstyle{plain}
\begingroup

\newtheorem{thm}{Theorem}[section]
\newtheorem{prop}[thm]{Proposition}
\newtheorem{cor}[thm]{\textbf{\textit{Corollary}}}
\newtheorem{lem}[thm]{Lemma}

\endgroup
\newtheorem{rem}[thm]{\textbf{Remark}}

\newtheorem{defn}[thm]{Definition}

\newcommand{\ov}[1]{\overline{#1}}
\newcommand{\wt}[1]{\widetilde{#1}}

\newcommand{\hi}[1]{\mathbb{H}^#1}
\newcommand{\m}[1]{\mathbb{R}^#1}

\newcommand{\N}{\mathbb{N}}
\newcommand{\Ne}{\mathbb{N}^\ast}

\newcommand{\R}{\mathbb{R}}
\newcommand{\sd}{\mathbb{S}^2}

\newcommand{\hd}{\mathbb{H}^2}

\def\rmd{\mathop{\rm d\kern -1pt}\nolimits}

\begin{document}
\centerline{\large\bf }

\title[General curvature estimates for stable $H$-surfaces in 
3-manifolds ]{General curvature estimates for stable $H$-surfaces in 
3-manifolds and applications}

\author{\bf Harold Rosenberg \& Rabah Souam \&  Eric Toubiana }

\address{Instituto de Matem\'atica Pura e Aplicada (IMPA) \newline
 \indent Estrada Dona Castorina, 110  \newline
 \indent CEP 22460-320 \newline 
\indent Rio de Janeiro, RJ \newline
\indent Brasil}
\email{rosen@impa.br}

\address{ Institut de Math{\'e}matiques de Jussieu\newline
 \indent CNRS UMR 7586 - Universit{\'e} Paris Diderot - Paris 7\newline
 \indent G\'eom\'etrie et Dynamique \newline
 \indent Site Chevaleret \newline
\indent Case 7012       \newline  
 \indent         75205 - Paris Cedex 13, France}
\email{souam@math.jussieu.fr}
\email{toubiana@math.jussieu.fr}

\subjclass[2000]{53A10, 53C42}
\keywords{constant mean curvature, stability, curvature estimates, 
gradient estimates, bounded sectional curvature}


\begin{abstract}
 We obtain an estimate for the norm of the second fundamental 
form of stable $H$-surfaces
in  Riemannian 3-manifolds with bounded 
sectional curvature.
Our estimate depends on the distance to the boundary of the surface and
on the bound on the sectional curvature
but not on the 
manifold itself. We give some applications, in particular we obtain an 
interior
gradient estimate for $H$-sections in Killing submersions.

\end{abstract}


\date{\today}

\maketitle

\vskip8mm

\section{Introduction}\label{Sec.Intro}

 Let $(M,g)$ be a smooth Riemannian 3-manifold.
 Consider an immersed surface  $\Sigma \looparrowright M$ 
with trivial normal bundle. Call $A$ its second
fundamental form and $N$ a global unit normal on it.
We denote by $H$ the 
length of the mean curvature vector of $\Sigma$.
When $\Sigma$ has constant mean curvature $H$, we say that 
$\Sigma$ is an $H$-surface.

Recall that an $H$-surface $\Sigma$  is said {\em strongly stable}
if for any 
$u \in C_0^\infty (\Sigma)$ we have 
\begin{equation*}
\int_\Sigma \vert \nabla u\vert ^2 d\Sigma \geq 
\int_\Sigma (\vert A \vert^2 +\text {Ric}(N))u^2 d\Sigma,
\end{equation*}
where $d\Sigma$ and  $\nabla$ stand respectively  for the area 
element and the gradient on $\Sigma$, and Ric denotes the
Ricci curvature of $M$.
 Throughout this paper, we will use 
the term {\em stable} to mean strongly stable.

Curvature estimates for  stable 
$H$-surfaces in 3-manifolds are an important tool in the study of
$H$-surfaces, see for instance Meeks-Perez-Ros \cite{[MPR]}. Graphs are stable
and more
generally, surfaces transverse to an ambient Killing field are stable.
Curvature estimates for graphs were obtained in 1952 
by  Heinz \cite{[Heinz]}.
In 1983,  Schoen \cite{[Sch]}
obtained an estimate for the norm
of the second fundamental form of stable minimal surfaces in a 
3-manifold $M$ depending on the distance to the boundary on the surface and 
an upper bound on the curvature tensor of $M$ and its covariant derivative. In
particular, in $\R^3$, he showed the existence of a constant $C>0$ such that 
 for any stable and orientable minimal surface $\Sigma$ one has 
\begin{equation}\label{Eq,Schoen}
\vert A (p) \vert \leq \frac{C}{d_\Sigma (p, \partial \Sigma)},
\end{equation}
for any $p\in \Sigma$, where $d_\Sigma$ denotes the intrinsic distance on
$\Sigma$.

In 1999,  B\'erard and  Hauswirth \cite{[Be-Haus]} extended  Schoen's work
to stable  $H$-surfaces with trivial normal
bundle in space forms. In 2005,
using different methods based on ideas of  Colding and  Minicozzi 
\cite{[Cold-Min2]},
 Zhang \cite{[Zh]} extended Schoen's result to stable $H$-surfaces 
(with trivial normal bundle) in a general 3-manifold, again the estimate
depends on $H$, an upper bound on the  sectional curvature and on the 
covariant derivative of the curvature tensor of $M$.

In this paper we obtain an estimate for the norm of the second fundamental 
form of stable $H$-surfaces
in  Riemannian 3-manifolds assuming  only a bound on the 
sectional curvature.
Our estimate depends on the distance to the boundary of the surface and
only on the bound on the  sectional curvature of the ambient manifold. More
precisely, our
main result is the following:

\medskip 

\noindent {\bf Main Theorem.} {\em Let $(M,g)$ be a complete smooth Riemannian
3-manifold of  bounded sectional curvature 
$\vert K \vert \leq \Lambda < +\infty$.

 Then there exists a universal constant $C$ which depends neither on $M$ nor on 
$\Lambda$, satisfying the following:

For any immersed  stable $H$-surface
$\Sigma \looparrowright M$
 with trivial normal bundle,
and for any $p\in \Sigma$ 
we have
\begin{equation*}
\vert A(p) \vert \leq \frac{C}{\min \bigl\{ d(p,\partial \Sigma),
\frac{\pi}{2\sqrt{\Lambda}}\bigr\}}.
\end{equation*}
}

\bigskip

We also obtain a {\em local} version of our result, see
Theorem \ref{T.local}. Our method of proof is completely different 
from the above mentioned works 
and is based
on a blow-up argument.
For the readers convenience we sketch the idea of the proof.

\medskip

Assuming by
contradiction the result is not true, we have for each $n $ a  
stable $H_n$-surface $\Sigma_n$   in some 3-manifold
$M_n$ with bounded sectional curvature, $\vert K \vert \leq \Lambda$,
 admitting a point $p_n^*\in \Sigma_n$  satisfying
$\vert A_n (p_n^*) \vert \min \bigl\{ d(p,\partial \Sigma),
\frac{\pi}{2\sqrt{\Lambda}}\bigr\} > n$. 
In addition, there exists a real number $r_n>0$
such that 
the geodesic disk $D_n^*$ on $\Sigma_n$ centered at $p_n^*$ with  radius $r_n$
lies in the domain of some chart of $M_n$ which allows us to treat this disk as
a surface in a Euclidean ball of fixed radius 
 in $\R^3$ endowed with the pull-back metric, with
$p_n^*$ at the origin.
Furthermore in our construction we can make
 $r_n\vert A_n (p_n^*) \vert \to +\infty$. We then blow-up the latter metric 
multiplying by the factor $\vert A_n (p_n^*) \vert$. This sequence of new
metrics converges to the Euclidean metric on compact sets of $\R^3$. The
sequence $(D_n^*)$ gives rise to a new sequence of surfaces still denoted by 
$(D_n^*)$ with controlled geometry and  second fundamental form at the origin
of norm 1. We can then construct a complete surface $S$, passing 
through the origin, in the accumulation set of the sequence $(D_n^*)$, 
whose universal cover $\wt S$ is
a stable $H$-surface in the Euclidean space $\R^3$. 
It is well-known
that $S$ is then a plane which contradicts the fact that the second
fundamental form of $S$ has norm 1 at the origin.

However, this blow-up argument requires some care because of technical
difficulties we explain in the proof.

\medskip

It is worth noticing that an estimate of the type (\ref{Eq,Schoen}) cannot be
expected in general, even for minimal surfaces, as such an estimate would imply
that a complete  stable $H$-surface is totally geodesic. For example,
in $\hi3$  there are complete non-totally geodesic  stable $H$-surfaces,
see 
 Silveira \cite{[Sil]}. Also in $\hi2 \times \R$, for any $0\leq H\leq 1/2 $
there are non-constant entire vertical $H$-graphs, and therefore  stable, see
 Nelli and  Rosenberg,
\cite{[Nel-Ros1]} and \cite{[Nel-Ros2]}.

\bigskip

As a consequence of the Main Theorem we have the following:

\begin{cor}\label{C.complet}
 Let $(M,g)$ be a complete smooth Riemannian
3-manifold of bounded sectional curvature 
$\vert K \vert \leq \Lambda < +\infty$.

Then there exists a constant $C$, which depends neither on $M$ nor on $
\Lambda$, such that for any immersed 
complete stable $H$-surface
$\Sigma \looparrowright M$, with trivial normal bundle, we have:
\begin{itemize}
\item if $\Sigma$ is non compact, then $\vert A(p) \vert  \leq C \sqrt \Lambda$ for any $p\in \Sigma$,

\item if $\Sigma$ is compact, then  $\vert A(p) \vert \min\bigl\{\mathrm{diam}\, \Sigma,  \frac{\pi}{\sqrt{\Lambda}}\bigr\} \leq C$ for any $p\in \Sigma$, where $\mathrm{diam}\, \Sigma $  denotes the intrinsic diameter of $\Sigma.$
\end{itemize}
\end{cor}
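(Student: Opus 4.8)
The plan is to derive Corollary~\ref{C.complet} directly from the Main Theorem by choosing, for each $p\in\Sigma$, a good lower bound on $d(p,\partial\Sigma)$. Since $\Sigma$ is complete and without boundary, $\partial\Sigma=\emptyset$, so in the expression $\min\{d(p,\partial\Sigma),\frac{\pi}{2\sqrt\Lambda}\}$ appearing in the Main Theorem we may interpret $d(p,\partial\Sigma)$ as $+\infty$; hence the Main Theorem gives immediately
\begin{equation*}
\vert A(p)\vert\leq\frac{C}{\frac{\pi}{2\sqrt\Lambda}}=\frac{2C}{\pi}\sqrt\Lambda
\end{equation*}
for every $p\in\Sigma$. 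Relabelling the constant, this is exactly the non-compact case, and in fact it holds in the compact case too; the point of the second bullet is to record the sharper information available when $\operatorname{diam}\Sigma$ is small compared with $\frac{\pi}{\sqrt\Lambda}$.

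For the compact case I would argue as follows. Fix $p\in\Sigma$ and let $q\in\Sigma$ be a point realizing (approximately) the diameter, so that $d(p,q)$ is close to $\operatorname{diam}\Sigma$. The idea is that although $\Sigma$ has no boundary, we can \emph{create} a boundary by removing a small metric ball and apply the Main Theorem to the resulting stable $H$-surface with boundary; the stability inequality in the definition only involves $C_0^\infty$ test functions, so it is inherited by any open subdomain, and trivial normal bundle is likewise inherited. Concretely, for $\varepsilon>0$ consider $\Sigma_\varepsilon:=\Sigma\setminus \overline{B_\Sigma(q,\varepsilon)}$, an immersed stable $H$-surface with boundary; for $p\in\Sigma_\varepsilon$ we have $d(p,\partial\Sigma_\varepsilon)\geq d(p,q)-\varepsilon$, so the Main Theorem yields
\begin{equation*}
\vert A(p)\vert\leq\frac{C}{\min\bigl\{d(p,q)-\varepsilon,\frac{\pi}{2\sqrt\Lambda}\bigr\}}.
\end{equation*}
Letting $\varepsilon\to0$ and then taking $q$ to realize the diameter (or taking a supremum over $q$) gives $\vert A(p)\vert\min\{d(p,q),\frac{\pi}{2\sqrt\Lambda}\}\leq C$; since this can be done with $d(p,q)$ as large as $\operatorname{diam}\Sigma$ (choosing $q$ far from $p$), one obtains $\vert A(p)\vert\min\{\operatorname{diam}\Sigma,\frac{\pi}{2\sqrt\Lambda}\}\leq C$. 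Finally, $\frac{\pi}{2\sqrt\Lambda}\geq\frac12\cdot\frac{\pi}{\sqrt\Lambda}$, so absorbing the factor $2$ into $C$ gives the stated bound with $\frac{\pi}{\sqrt\Lambda}$. Note this also re-proves the first bullet for compact $\Sigma$, and combining with the non-compact case we get the uniform bound $\vert A\vert\leq C\sqrt\Lambda$ in all cases, consistent with the first bullet as stated (which does not restrict to non-compact $\Sigma$ essentially — only the diameter refinement is special to the compact case).

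The only genuine subtlety — and where I would be most careful — is the passage from "$\Sigma$ complete without boundary" to a legitimate application of the Main Theorem, whose statement is phrased for surfaces $\Sigma\looparrowright M$ "with trivial normal bundle" and with a distance-to-boundary term. One must check that (i) the stability inequality for $\Sigma_\varepsilon$ holds (it does: restrict test functions), (ii) $\Sigma_\varepsilon$ still has trivial normal bundle (it does: restrict the global unit normal $N$), and (iii) the intrinsic distance $d(p,\partial\Sigma_\varepsilon)$ used in the Main Theorem is at least $d_\Sigma(p,q)-\varepsilon$ — this is the triangle inequality for the intrinsic metric, valid because any path in $\Sigma$ from $p$ to a point of $\partial B_\Sigma(q,\varepsilon)$ has length $\geq d_\Sigma(p,q)-\varepsilon$. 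There is also the alternative, cleaner route of applying directly the local version Theorem~\ref{T.local} at each point, which avoids excising a ball; I would mention it as a remark but carry out the excision argument above as the main line since it uses only the Main Theorem as stated.
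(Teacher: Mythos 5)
Your overall strategy — produce a subdomain of $\Sigma$ with nonempty boundary and apply the Main Theorem to it — is the same as the paper's, but your implementation (excising a small ball $B_\Sigma(q,\varepsilon)$ around a far point and letting $\varepsilon\to 0$) differs from the paper's, which simply applies the Main Theorem to the open geodesic disk $D\subset\Sigma$ of radius $\operatorname{diam}\Sigma/2$ centered at $p$, for which $d(p,\partial D)=\operatorname{diam}\Sigma/2$ directly. The paper's version avoids the $\varepsilon$-limit and the discussion of which hypotheses descend to $\Sigma_\varepsilon$; both routes lose the same factor of $2$. Your observations (i)--(iii) about stability, triviality of the normal bundle and the intrinsic distance are all correct, and your treatment of the non-compact case agrees with the paper's.

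There is, however, a concrete error in the compact case: you write that ``this can be done with $d(p,q)$ as large as $\operatorname{diam}\Sigma$.'' That is false for a fixed $p$. The quantity $\sup_{q}d_\Sigma(p,q)$ is the \emph{eccentricity} of $p$, and while it is at least $\tfrac12\operatorname{diam}\Sigma$ (if $d(a,b)=\operatorname{diam}\Sigma$ then $d(p,a)+d(p,b)\geq\operatorname{diam}\Sigma$, so one of the two is $\geq\tfrac12\operatorname{diam}\Sigma$), it equals $\operatorname{diam}\Sigma$ only when $p$ is itself an endpoint of a diameter. On a long thin surface a point near the ``middle'' has eccentricity close to $\tfrac12\operatorname{diam}\Sigma$. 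Thus the correct conclusion of your excision argument is
\begin{equation*}
\vert A(p)\vert\,\min\Bigl\{\tfrac12\operatorname{diam}\Sigma,\tfrac{\pi}{2\sqrt\Lambda}\Bigr\}\leq C,
\end{equation*}
which is exactly what the paper obtains with $D$ of radius $\operatorname{diam}\Sigma/2$; pulling out the common factor $\tfrac12$ yields the stated bound with constant $2C$. Your final inequality therefore stands, but the intermediate claim about $d(p,q)$ should be corrected to $d(p,q)\geq\tfrac12\operatorname{diam}\Sigma$.
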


\begin{proof}
For non compact surfaces, this follows immediately from the Main Theorem.

\smallskip

Suppose now that $\Sigma$ is compact. Set $d= \mathrm{diam}\, \Sigma $ and let $p\in \Sigma$, then 
 the geodesic disk
$D$ in $\Sigma$  of center $p$ and radius $d/2$ satisfies 
$d(p, \partial D)=d/2.$
Applying the Main Theorem to  $D$ we get  $\vert A(p) \vert \min\bigl\{\frac{d}{2},  \frac{\pi}{2\sqrt{\Lambda}}\bigr\} \leq C$. The result follows. 
\end{proof}

This corollary was enunciated by  Ros \cite{[Ros]}  for minimal surfaces (even
with non trivial normal bundle) with the additional hypotheses that the
derivatives of the curvature tensor are bounded and the injectivity radius is
bounded from below.

%

\bigskip

We are grateful to G. Besson, E. Hebey, J. Jost and M. Vaugon 
for useful information about harmonic coordinates.

\section{Proof of the Main Theorem }

In the blow-up argument we sketched before, 
the first idea which comes to mind is to use a chart given by the exponential
map, that is, to use geodesic normal coordinates. However in those coordinates
bounded geometry guarantees only a $C^0$-control of the metric. For example
consider the metrics $g_n, n \in \N$, on $\R^2$ given in polar coordinates by
\begin{equation*}
 g_n=dr^2 + G_n^2(r,\theta)d\theta^2,
\end{equation*}
with 
\begin{equation*}
 G_n(r,\theta)=r(1+ar^2 + e^{{-1}/{r^2}}\cos n\theta),
\end{equation*}
where $a>0$ is big enough. In Cartesian coordinates 
$x=r\cos \theta, y=r\sin \theta$, we have
\begin{align*}
 g_n &=\Big( \frac{x^2}{x^2+y^2} +\frac{y^2G_n^2}{(x^2+y^2)^2}\Big) dx^2
+2xy\Big( \frac{1}{x^2+y^2} - \frac{G_n^2}{(x^2+y^2)^2}\Big) dx dy \\
&\qquad \qquad \qquad \qquad
+\Big( \frac{y^2}{x^2+y^2} + \frac{x^2G_n^2}{(x^2+y^2)^2}\Big) dy^2 \\
&= g_{11}^ndx^2 + 2g_{12}^ndx dy + g_{22}^ndy^2.
\end{align*}
It is easily seen that $g_n$ is a complete and smooth metric on $\R^2$
and that $g_{ij}^n (0)=\delta_{ij}$  for any $n\in \N$, so that, $(x,y)$ 
are   geodesic normal coordinates at the origin for the metrics $g_n$.
The Gauss curvature of $g_n$ is given by 
$K_n=-\frac{1}{G_n}\frac{\partial^2 G_n }{\partial r^2}$. Thus, a computation
shows that
\begin{equation*}
 K_n=-\frac{6a +(\frac{4}{r^6}-\frac{2}{r^4})e^{{-1}/{r^2}}\cos n\theta }
{1+ar^2 + e^{{-1}/{r^2}}\cos n\theta}.
\end{equation*}
Therefore, for $a>0$ big enough, the Gaussian curvatures $K_n$ are negative
and 
uniformly bounded on $\R^2$ with respect to $n$. Consequently, the injectivity
radius 
of $(\R^2,g_n)$ is infinite for any $n\in \N$. We can show directly (or as a
consequence of Rauch comparaison theorem) that for any $r>0$ there exists  
$Q_0 >0$ depending on $r, a$ and not on $n$ such that on the geodesic balls
of $(\R^2,g_n)$ centered at the origin with radius $r$ we have 
\begin{equation*}
 Q_0^{-1} \delta_{ij}\leq g_{ij}^n \leq Q_0 \delta_{ij}, \quad \text{as
quadratic forms},
\end{equation*}
this gives a uniform $C^0$-control of the metrics $g_n$. Nevertheless we do
not have uniform $C^1$-control. Indeed, consider for example  the coefficient 
$g_{11}^n$, we have 
\begin{equation*}
 \frac{\partial g_{11}^n}{\partial x}=
 \frac{\partial }{\partial x} \big( \frac{x^2}{x^2+y^2}   \big)
+ \frac{\partial }{\partial x} \big( \frac{y^2}{(x^2+y^2)^2}\big)G_n^2
+ 2G_n \frac{y^2}{(x^2+y^2)^2} \frac{\partial G_n}{\partial x}.
\end{equation*}
Observe that at any fixed point $(x,y)\not=(0,0)$, there is only one term which
is not
uniformly bounded with respect to $n$: the one involving the partial derivative
of $G_n$ with respect to $x$. In coordinates $(x,y)$ we have 
\begin{equation*}
 G_n(x,y)=\sqrt{x^2 + y^2}+a (x^2 + y^2)^{3/2} +
 \sqrt{x^2 + y^2}e^{-1/(x^2 + y^2)} \frac{\text{Re} (x+iy)^n}{(x^2
+y^2)^{n/2}}.
\end{equation*}
Also, at any fixed point $(x,y)\not=(0,0)$, in the partial derivative 
$\partial G_n /\partial x$, the only term which is not uniformly bounded with
respect to $n$ is the one involving the derivative of 
$ {\text{Re} (x+iy)^n}/{(x^2 +y^2)^{n/2}} $. We have, 
using the relations $x=r\cos \theta, y=r\sin \theta$,
\begin{align*}
  \frac{\partial }{\partial x}\frac{\text{Re} (x+iy)^n}{(x^2 +y^2)^{n/2}} &=
n\frac{\text{Re} (x+iy)^{n-1}}{(x^2 +y^2)^{n/2}} -
nx\frac{\text{Re} (x+iy)^n}{(x^2 +y^2)^{1+n/2}}\\
&= \frac{n}{r} \cos (n-1)\theta -\frac{n}{r} \cos\theta \cos (n-1)\theta \\
&=\frac{n}{r} \sin\theta \sin n\theta.
\end{align*}
This shows that at any fixed point $(x,y)\not= (0,0)$ whose argument
$\theta$ is not rational with $\pi$ the partial derivative 
$(\partial g_{11}^n /\partial x)(x,y)$ is not uniformly bounded with respect to
$n$. Consequently uniform estimates of the Gaussian curvature do not imply 
a local $C^1$-control of the metric in geodesic normal coordinates.

\medskip

For this reason we will instead use {\em harmonic coordinates}. More precisely,
we will need the following result which can be deduced from
\cite[Theorem 6]{[Heb-Her]}.

\begin{thm}\label{T.cartes harmoniques}
Let $\alpha \in (0,1)$ and  $ \delta >0$.
Let $(M,g)$ be a smooth Riemannian 3-manifold, without boundary, of bounded
sectional curvature $\vert K \vert \leq \Lambda < +\infty$ and $\Omega$  an
open subset of $M$. Set 
\begin{equation*}
 \Omega (\delta)= \bigl\{ x\in M \mid d_g (x,\Omega)<\delta\bigr\}
\end{equation*}
where $d_g$ is the distance  asociated to $g$. Suppose there exists $i>0$ such
that for all $x\in  \Omega (\delta)$ we have $\text{inj}_{(M,g)}(x)\geq i$,
where $\text{inj}_{(M,g)}(x)$ is the injectivity radius at $x$.

Then there exist a constant $Q_0>1$ and a real
number $r_0 >0$, depending only on $i$, $\delta$,  $\Lambda$ and $\alpha$, and
not on $M$,
such that for any $x\in \Omega$, there exists a harmonic
coordinate chart 
$(U,\varphi, B(x,r_0))$, $U$ being an open subset of $\R^3$ containing the
origin
and  $B(x,r_0)$ the geodesic ball in $M$ centered at $x$ of radius $r_0$, 
with $\varphi (0)=x$,  and such that
the metric tensor $g$ is
$C^{1,\alpha}$-controlled. Namely the components $g_{ij}$, 
$i,j =1,2,3$ of $g$ satisfy:
\begin{align}
 Q_0^{-1} \delta_{ij} \leq g_{ij} \leq Q_0 \delta_{ij}, \quad \text{as
quadratic forms}, \label{quadratique}\\
\sum_{k=1}^3 \sup_{y \in U} \vert {\partial_k g_{ij} (y)}\vert 
+ \sum_{k=1}^3 \sup_{y \not=z }
 \frac{\vert  \partial_k g_{ij}(y)  - \partial_k g_{ij}(z) \vert }
{d_g(y,z)^\alpha} \leq Q_0. \label{norme}
\end{align}
\end{thm}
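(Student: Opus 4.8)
The plan is to derive Theorem~\ref{T.cartes harmoniques} as essentially a restatement of \cite[Theorem 6]{[Heb-Her]}, whose hypotheses are bounds on the Ricci curvature (or on the full sectional curvature, which is stronger) together with a lower bound on the injectivity radius; the only genuine work is to arrange the quantifiers so that $r_0$ and $Q_0$ depend \emph{only} on $i,\delta,\Lambda,\alpha$ and not on $M$ itself. First I would recall the precise statement of Hebey--Herzlich: given $p>n$ (here $n=3$), $\alpha=1-n/p$, a point $x$ in a manifold with $|\mathrm{Ric}|\le\Lambda'$ on a ball $B(x,\rho)$ and $\mathrm{inj}(x)\ge i$, there is a harmonic chart on a ball of controlled radius in which the metric is $C^{1,\alpha}$ (indeed $W^{2,p}$) controlled, with constants depending only on $n,p,i,\rho,\Lambda'$. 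Since $|K|\le\Lambda$ implies $|\mathrm{Ric}|\le 2\Lambda$ pointwise, the curvature hypothesis of our theorem is stronger, so those results apply verbatim.

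The second step is to handle the open set $\Omega$ and the collar $\Omega(\delta)$. The role of $\delta$ is exactly to guarantee that for every $x\in\Omega$ the metric ball $B(x,\delta)$ is entirely contained in a region where the curvature and injectivity-radius bounds hold, so that the \emph{local} hypotheses of \cite[Theorem 6]{[Heb-Her]} are satisfied uniformly at every point of $\Omega$. Concretely: for $x\in\Omega$ the ball $B(x,\delta)\subset\Omega(\delta)$, and on $\Omega(\delta)$ we have both $|K|\le\Lambda$ (hence $|\mathrm{Ric}|\le 2\Lambda$) and $\mathrm{inj}_{(M,g)}\ge i$ by hypothesis. Applying Hebey--Herzlich at each such $x$ with fixed auxiliary parameters ($p$ chosen from $\alpha$ via $\alpha=1-3/p$, radius parameter $\delta$, curvature bound $2\Lambda$, injectivity bound $i$) produces a harmonic chart with the desired $C^{1,\alpha}$-control, and the radius $r_0$ of the geodesic ball on which the chart is defined, together with the constant $Q_0$, are extracted from the (manifold-independent) constants in that theorem. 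I would then simply enlarge $Q_0$ if necessary and shrink $r_0$ so that both \eqref{quadratique} and \eqref{norme} hold with the same $Q_0$, and take the maximum/minimum over the finitely many normalizations to get a single pair $(Q_0,r_0)$ valid for all $x\in\Omega$ and all such $M$.

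A point deserving care is the exact form of the norm in \eqref{norme}: Hebey--Herzlich typically state a $W^{2,p}$ bound on the $g_{ij}$ in the chart, and one passes to the $C^{1,\alpha}$ bound by the Sobolev embedding $W^{2,p}\hookrightarrow C^{1,\alpha}$ on a Euclidean ball (valid since $p>3$, $\alpha=1-3/p$), whose embedding constant again depends only on $p$, $\alpha$ and the size of the ball — hence only on the allowed data. One must also note that the H\"older seminorm in \eqref{norme} is written with respect to the ambient distance $d_g$ rather than the Euclidean distance in $U$; because the chart is $C^0$-controlled by \eqref{quadratique}, $d_g$ and the Euclidean distance are uniformly comparable (with constants depending on $Q_0$), so replacing one by the other only changes $Q_0$ by a bounded factor. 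The main obstacle, such as it is, is thus not mathematical depth but bookkeeping: one must verify that every constant entering through \cite[Theorem 6]{[Heb-Her]} and through the Sobolev embedding is a function of $(i,\delta,\Lambda,\alpha)$ alone, so that no dependence on $M$ sneaks in — this is precisely what makes the blow-up argument in the Main Theorem work, since there the manifolds $M_n$ vary.
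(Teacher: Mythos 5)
Your proposal is correct and follows exactly the route the paper takes: the paper offers no argument beyond citing \cite[Theorem 6]{[Heb-Her]}, and your elaboration (reducing $|K|\le\Lambda$ to a Ricci bound, using the collar $\Omega(\delta)$ to localize the hypotheses uniformly at each $x\in\Omega$, and passing from $W^{2,p}$ to $C^{1,\alpha}$ control via Sobolev embedding with manifold-independent constants) is precisely the intended deduction.
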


\bigskip

 We will also need the following result.

\begin{lem}\label{L.injectivity}
Let $(M,g)$ be a smooth complete Riemannian manifold 
of bounded sectional curvature, 
$\vert K \vert \leq \Lambda < +\infty$ -- thus for each  $x\in M$ 
the exponential map $\mathrm{exp}_x :
B_{\vec 0}(\frac{\pi}{\sqrt{\Lambda}}) \subset T_x M \rightarrow
B(x,\frac{\pi}{\sqrt{\Lambda}})$ is a local diffeomorphism.

 Then in the closed ball 
$\ov B_{\vec 0}(\frac{\pi}{4\sqrt{\Lambda}})$, the injectivity radius 
in  $B_{\vec 0}(\frac{\pi}{\sqrt{\Lambda}})$ endowed with
the pull-back metric $\mathrm{exp}_x ^{\ast}g$ is at least 
$\frac{\pi}{4\sqrt{\Lambda}}$.
\end{lem}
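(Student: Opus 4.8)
The plan is to work entirely inside the pulled-back metric $\tilde g = \exp_x^\ast g$ on the ball $B_{\vec 0}(\pi/\sqrt\Lambda) \subset T_xM$, and to exploit two facts: the radial geodesics through the origin are honest minimizing geodesics of $\tilde g$ (they are the images of the straight lines through $\vec 0$, and $\exp_x$ is a local diffeomorphism on this ball so $\tilde g$ is a genuine metric there), and the sectional curvature of $\tilde g$ still satisfies $|K| \le \Lambda$ since curvature is a local invariant and $\exp_x$ is a local isometry from $(\text{ball}, \tilde g)$ onto its image. First I would fix a point $q \in \overline B_{\vec 0}(\pi/(4\sqrt\Lambda))$ and suppose, for contradiction, that there is a geodesic loop (or a pair of distinct minimizing geodesics between two nearby points) at $q$ of length $< \pi/(2\sqrt\Lambda)$, witnessing $\mathrm{inj}_{(B,\tilde g)}(q) < \pi/(4\sqrt\Lambda)$; the two standard obstructions to injectivity radius are (i) a conjugate point along a geodesic and (ii) a geodesic loop / short closed geodesic, so I would handle these two cases.

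For case (i): a conjugate point to $q$ within distance $\pi/(4\sqrt\Lambda)$ is impossible by Rauch's comparison theorem, since $K \le \Lambda$ forces the first conjugate point along any geodesic to be at distance $\ge \pi/\sqrt\Lambda$. For case (ii): here is where the radial structure is used. Suppose $\gamma$ is a geodesic in $(B,\tilde g)$ starting and ending at $q$ with length $L < \pi/(2\sqrt\Lambda)$. Then every point of $\gamma$ lies within distance $L/2 < \pi/(4\sqrt\Lambda)$ of $q$ (measuring along $\gamma$), hence within $\tilde g$-distance $|q| + \pi/(4\sqrt\Lambda) \le \pi/(4\sqrt\Lambda) + \pi/(4\sqrt\Lambda) = \pi/(2\sqrt\Lambda) < \pi/\sqrt\Lambda$ of the origin, so $\gamma$ stays well inside the ball $B_{\vec 0}(\pi/\sqrt\Lambda)$ where everything is defined and $\exp_x$ is a local diffeomorphism. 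Projecting by $\exp_x$, the image $\exp_x \circ \gamma$ would be a geodesic loop at $\exp_x(q)$ in $M$ of the same length $L < \pi/(2\sqrt\Lambda)$. The contradiction I want is with Klingenberg-type estimates; more directly, I would argue that a geodesic loop of length $< \pi/\sqrt\Lambda$ in a manifold with $K \le \Lambda$ cannot exist unless it bounds in a way ruled out by comparing with the sphere of curvature $\Lambda$ — or, cleaner still, lift the whole picture once more: since $\gamma$ is contained in $B_{\vec 0}(\pi/\sqrt\Lambda)$ and is a geodesic there, and the two radial geodesics from $\vec 0$ to the endpoint of $\gamma$ and from $\vec 0$ to $q$ together with $\gamma$ form a geodesic triangle of perimeter $< \pi/\sqrt\Lambda$, Rauch comparison (the Toponogov-type estimate from above, valid for small triangles with $K\le\Lambda$) forces such a triangle to be degenerate.

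The cleanest route, and the one I expect the authors take, avoids even discussing loops: I would show directly that for $q \in \overline B_{\vec 0}(\pi/(4\sqrt\Lambda))$ the exponential map $\exp^{\tilde g}_q$ of the metric $\tilde g$ is a diffeomorphism from the $\tilde g$-ball of radius $\pi/(4\sqrt\Lambda)$ in $T_qB$ onto its image. Non-injectivity of $\exp^{\tilde g}_q$ at radius $< \pi/(4\sqrt\Lambda)$ would (by Klingenberg's lemma) produce either a conjugate point — excluded by Rauch since $K \le \Lambda$ gives no conjugate points before $\pi/\sqrt\Lambda > \pi/(4\sqrt\Lambda)$ — or a closed geodesic through $q$ of length $2\,\mathrm{inj} < \pi/(2\sqrt\Lambda)$ entirely contained (as estimated above) in $B_{\vec 0}(\pi/\sqrt\Lambda)$, which then pushes down via $\exp_x$ to a closed geodesic in $M$ of length $< \pi/(2\sqrt\Lambda) < 2\pi/\sqrt\Lambda$; but in a manifold with $K \le \Lambda$ every closed geodesic has length $\ge 2\pi/\sqrt\Lambda$ by the standard second-variation/index argument (a geodesic longer than $\pi/\sqrt\Lambda$ between conjugate-free points... more precisely, a closed geodesic that is too short would violate the comparison with the circle of length $2\pi/\sqrt\Lambda$ on the $\Lambda$-sphere). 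That contradiction closes the argument.

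\textbf{Main obstacle.} The delicate point is bookkeeping the nesting of balls: one must be sure that any short geodesic loop or closed geodesic based at a point of $\overline B_{\vec 0}(\pi/(4\sqrt\Lambda))$ stays inside $B_{\vec 0}(\pi/\sqrt\Lambda)$ so that it can legitimately be pushed down by $\exp_x$ and so that Rauch/Klingenberg apply on a region where $\tilde g$ is a bona fide metric; the triangle-inequality estimate "$|q| + (\text{length}/2) < \pi/(2\sqrt\Lambda)$" is exactly what makes the chosen radius $\pi/(4\sqrt\Lambda)$ work, and getting the constants to line up is the crux. The curvature-comparison inputs (no conjugate points before $\pi/\sqrt\Lambda$ when $K\le\Lambda$; closed geodesics have length $\ge 2\pi/\sqrt\Lambda$ when $K\le\Lambda$) are classical and I would simply cite them.
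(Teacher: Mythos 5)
Your reduction to conjugate points and geodesic loops is the right first step, and the conjugate-point case is handled correctly (Rauch with $K\le\Lambda$ gives no conjugate points before distance $\pi/\sqrt\Lambda$). But the argument you give to rule out the geodesic loop case has a genuine gap, and it is precisely the point of the lemma. The statement you invoke — that in a manifold with $K\le\Lambda$ every closed geodesic has length $\ge 2\pi/\sqrt\Lambda$ — is false: a flat torus $\R^2/(\varepsilon\Z)^2$ has $K\equiv 0\le\Lambda$ for every $\Lambda>0$ and carries closed geodesics of length $\varepsilon$ as small as you like. An upper curvature bound controls conjugate points, not the systole; the Klingenberg estimate goes the other way (it bounds the injectivity radius \emph{by} the shortest closed geodesic, it does not bound the shortest closed geodesic). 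For the same reason, pushing the loop down by $\exp_x$ into $M$ buys you nothing, since $M$ itself is allowed to have short closed geodesics. Your alternative via a ``degenerate triangle'' is also not a proof: the loop starts and ends at $q$, so the two radial geodesics to its endpoints coincide and there is no triangle, only a bigon; and the Rauch/Alexandrov comparison from an upper curvature bound does not force small bigons to be degenerate unless you already know geodesics of length $<\pi/\sqrt\Lambda$ are unique, which is what you are trying to prove.

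What actually closes the loop case — and this is what the paper does — is a convexity argument using the special radial structure of the pulled-back metric. Let $r(y)=d_{\tilde g}(y,\vec 0)$. Because the radial lines from the origin are $\tilde g$-geodesics and $K\le\Lambda$, the Hessian comparison theorem gives $\nabla^2 r>0$ on $B_{\vec 0}(\pi/(2\sqrt\Lambda))\setminus\{\vec 0\}$. A geodesic loop $\gamma$ based at a point of $\overline B_{\vec 0}(\pi/(4\sqrt\Lambda))$ with length $<\pi/(2\sqrt\Lambda)$ lies, by your own triangle-inequality bookkeeping, inside $B_{\vec 0}(\pi/(2\sqrt\Lambda))$. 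Take the smallest geodesic sphere about $\vec 0$ enclosing $\gamma$; it touches $\gamma$ at some point $z$ that is a smooth point of the loop (one must also argue that the possible corner of the loop does not sit at $z$, which in the paper is arranged by passing to a limiting loop whose midpoint is the touching candidate, via a minimizing sequence of base points and cut points and a positivity argument for the limiting injectivity radius). At $z$, $f(t)=r(\gamma(t))$ satisfies $f'(0)=0$ and $f''(0)=\nabla^2 r(\gamma',\gamma')>0$, so $z$ is a strict local minimum of $r$ along $\gamma$ — contradicting that the sphere encloses $\gamma$. This replacement of the false global systole bound by the local convexity of $r$ is the missing idea in your proposal; once it is in place, the rest of your plan (the nesting of balls and the constant $\pi/(4\sqrt\Lambda)$) lines up as you describe.
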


\begin{proof}
For any $p\in \ov  B_{\vec 0}(\frac{\pi}{4\sqrt{\Lambda}})$ we denote by $i(p)$
the injectivity radius of $p$  and by $C(p)$ its cut-locus in 
$(B_{\vec0}(\frac{\pi}{\sqrt{\Lambda}}),\text{exp}_x ^{\ast}g )$. 
We denote by $i_0$ the infimum of $i(p)$, 
$p\in \ov  B_{\vec 0}(\frac{\pi}{4\sqrt{\Lambda}})$. 

Assume by contradiction that $i_0 <\frac{\pi}{4\sqrt{\Lambda}}$. 
Let $(p_n) \in B_{\vec 0}(\frac{\pi}{4\sqrt{\Lambda}})$ be a sequence such
that 
$i(p_n) \to i_0$ and $i(p_n) < \frac{\pi}{4\sqrt{\Lambda}}$.
For each $n$ we take a point $q_n \in B_{\vec 0}(\frac{\pi}{2\sqrt{\Lambda}})$ 
such that $i(p_n)=d(p_n,q_n)$($=d(p_n,C(p_n)$), where $d$ is the distance in 
$(B_{\vec0}(\frac{\pi}{\sqrt{\Lambda}}),\text{exp}_x ^{\ast}g )$, we can assume
that $q_n \in B_{\vec 0} (\frac{3\pi}{8\sqrt{\Lambda}} + \frac{i_0}{2})$. 
Up to
choosing  subsequences, we can assume that $(p_n)$ converges to 
$p_\infty \in \ov  B_{\vec 0}(\frac{\pi}{4\sqrt{\Lambda}})$ and  
that $(q_n)$ converges to 
$q_\infty \in \ov  B_{\vec 0} (\frac{3\pi}{8\sqrt{\Lambda}} + \frac{i_0}{2})
\subset B_{\vec 0}(\frac{\pi}{2\sqrt{\Lambda}})$
and so $d(p_\infty, q_\infty)=i_0$. 

Note that $i_0 >0$. Indeed, there is a
neighborhood $W\subset B_{\vec 0}(\frac{\pi}{\sqrt{\Lambda}})$ of $p_\infty$ 
and there exists $\delta >0$ such that for any points $x, y$ in $W$ there is a
unique minimizing geodesic of length $<\delta$ joining $x$ to $y$, see do
Carmo \cite[Remark 3.8, p.72]{[DoC1]}. It follows 
easily that there exist a neighborhood $W^\prime \subset W$ of $p_\infty$ and 
$\delta^\prime >0$ such that for any $x\in W^\prime $ we have 
$i(x) \geq   \delta^\prime$, and so $i_0 \geq \delta^\prime$.

Since $d(p_n,q_n) < d(p_n, \partial B_{\vec 0}(\frac{\pi}{\sqrt{\Lambda}}))$, 
there exists for each $n$ 
a minimizing geodesic $\alpha_n$ in 
$B_{\vec 0}(\frac{\pi}{\sqrt{\Lambda}})$ joining $p_n$ to $q_n$.  
Actually, as $q_n \in C(p_n)$ and is not conjugate to $p_n$ 
(since $d(p_n,q_n)<\frac{\pi}{4\sqrt{\Lambda}}$)  there is another minimizing 
geodesic $\beta_n$ joining $p_n$ to $q_n$. As $d(p_n,q_n)=i(p_n)$, by a usual
argument, cf. for
instance Petersen \cite[Lemma 16, p.142]{[Pet]},  $\alpha_n$ and $\beta_n$ fit
smoothly 
together at $q_n$ to form a geodesic loop $\gamma_n$ (with possibly a corner at 
$p_n$). As the length of $\gamma_n$ is bounded from below by  $2i_0$, up to
taking a subsequence, $\gamma_n$ converges to a geodesic loop 
$\gamma_\infty$ at $p_\infty$ of length $2i_0=2d(p_\infty,q_\infty)$
 with midpoint $q_\infty$. 
Therefore $q_\infty \in C(p_\infty)$ and consequently 
$i(p_\infty)=i_0$.

 We now discuss two cases.

\medskip 

\noindent {\em Case 1.} Assume that $q_\infty \not\in
 \ov B_{\vec 0}(\frac{\pi}{4\sqrt{\Lambda}})$. 

Consider the geodesic sphere $S$ centered at $\vec 0$ of smallest radius
enclosing  the loop $\gamma_\infty$. Then $\gamma_\infty$ is tangent
to
$S$ at some smooth point $z$. We now see this is contradictory. For 
$x\in  B_{\vec 0}(\frac{\pi}{2\sqrt{\Lambda}})$, set $r(x)=d(x,\vec 0)$. 
Consider the
function $f(t)=r(\gamma_\infty (t))$, where $t$ is an arc length 
parameter of $\gamma_\infty$ with $\gamma_\infty  (0)=z$. 

We have 
$f^\prime (0)=0$ and 
$f^{\prime \prime}(t)=\nabla^2 
r (\gamma_\infty^\prime ,\gamma_\infty^\prime )+\langle 
\nabla r , \frac{D  }{dt }\gamma_\infty^\prime\rangle=
\nabla^2 
r (\gamma_\infty^\prime ,\gamma_\infty^\prime )$. By comparison theorems we
know that $\nabla^2 r$ is positive definite since $r(z) \leq
\frac{\pi}{4\sqrt{\Lambda}} +i_0 \leq \frac{\pi}{2\sqrt{\Lambda}}$, 
Petersen \cite[Chapter
6, Section 5, Theorem~27]{[Pet]}. Therefore $f$ has a local strict minimum
at $0$ contradicting the fact that $S$ encloses the loop $\gamma_\infty$.

\medskip 

\noindent {\em Case 2.} Assume that $q_\infty \in
 \ov B_{\vec 0}(\frac{\pi}{4\sqrt{\Lambda}})$.

By the symmetry of the cut-locus, $p_\infty \in C(q_\infty)$. Observe that we
 have by construction $i_0 \leq i(q_\infty)\leq  d(p_\infty,q_\infty)=i_0$.
Therefore $i(q_\infty)=d(p_\infty,q_\infty)$ and so $p_\infty$ is a closest
point to $q_\infty$ in $C(q_\infty)$. As before we deduce that $\gamma_\infty$
is also smooth at $p_\infty$. Taking the smallest geodesic sphere centered at 
$\vec 0$ enclosing the closed geodesic $\gamma_\infty$, we arrive to a
contradiciton as in case 1.
\end{proof}

\bigskip

We now start the proof of our Main Theorem.

\medskip

We first show that the conclusion of the theorem is true with a
constant $C(\Lambda)$ depending on $\Lambda$. Later we will show that this
constant may be chosen independant of $\Lambda$.

Assume by contradiction that  for any 
$n\in \Ne$ there are: a complete smooth Riemannian 3-manifold $(M_n,g_n)$
with bounded sectional curvature, $\vert K \vert \leq \Lambda$, 
a  stable $H_n$-surface 
$\Sigma_n  \looparrowright     M_n $ 
and a point $p_n \in \Sigma_n$ satisfying 
\begin{equation}\label{Eq.inegalite1}
\vert A_n (p_n) \vert  \min ( d_{\Sigma_n}(p_n, \partial\Sigma_n),
\frac{\pi}{2\sqrt{\Lambda}}) \geq n,
\end{equation}
$ A_n$ being the second fundamental form of $\Sigma_n$.

We denote by $D_n \subset \Sigma_n$ the open geodesic
disk of radius 
$\min ( d_{\Sigma_n}(p_n, \partial\Sigma_n), \frac{\pi}{2\sqrt{\Lambda}} )$
centered at $p_n$.

For any $n\in \Ne$ let us denote by ${\vec 0_n}$ the origin in $T_{p_n}M_n$. 
Call  $\wt D_n$ the connected component, containing  $\vec 0_n$, of
$\text{exp}_{p_n}^{-1}
(D_n) \cap B_{\vec 0_n}(\frac{\pi}{8\sqrt{\Lambda}})\subset T_{p_n}M_n $. 
We endow the ball $B_{\vec 0_n}(\frac{\pi}{\sqrt{\Lambda}})$ with the
pull-back metric 
$\text{exp}_{p_n}^\ast g_n$. By abuse of notation, we still denote by $g_n$
this metric and note that its sectional curvature satisfies 
$\vert K \vert \leq \Lambda$.

Observe that $\wt D_n$ is a stable $H_n$-surface with trivial normal bundle in 
$B_{\vec 0_n}(\frac{\pi}{\sqrt{\Lambda}})$. Indeed, by a result of
Fischer-Colbrie and Schoen \cite{[FC-Sc]}, there exists a
positive Jacobi function $u$ on $D_n$, therefore 
$u\circ \text{exp}_{p_n}$ is a positive Jacobi function on 
$\wt D_n$, this implies stability of $\wt D_n$ (cf. \cite{[FC-Sc]}). By abuse
of notation we still denote by $A_n$ the second fundamental form of 
$\wt D_n$ in $(B_{\vec 0_n}(\frac{\pi}{\sqrt{\Lambda}}), g_n)$.

Note that 
\begin{equation}\label{Eq.inegalite1a} 
 \vert A_n (\vec 0_n) \vert \min \bigl\{d_{\wt D_n} (\vec 0_n, \partial \wt
D_n), \frac{\pi}{2\sqrt{\Lambda}}\bigr\} \geq n.
\end{equation}
Indeed, we have $\vert A_n (\vec 0_n) \vert =\vert A_n (p_n) \vert$
and one can check that for any $x\in \partial \wt D_n$ we have 
$d_{\wt D_n} (\vec 0_n, x) \geq d_{ D_n} (p_n, \partial D_n)$ (consider the two
cases $x\in \partial  B_{\vec 0_n}(\frac{\pi}{4\sqrt{\Lambda}})$ and \linebreak
$x\not\in \partial  B_{\vec 0_n}(\frac{\pi}{4\sqrt{\Lambda}})$). Then we
conclude by (\ref{Eq.inegalite1}).

Let $\alpha \in (0,1)$ be a fixed number. Consider the constants $Q_0>1$ and
$r_0$ given in Theorem \ref{T.cartes harmoniques} applied to 
$(M,g)= (B_{\vec 0_n}(\frac{\pi}{\sqrt{\Lambda}}),g_n)$, 
$\Omega=B_{\vec 0_n}(\frac{\pi}{8\sqrt{\Lambda}})$,  
$\delta=\frac{\pi}{8\sqrt{\Lambda}}$ and $i=\frac{\pi}{4\sqrt{\Lambda}}$ (cf. 
Lemma \ref{L.injectivity}).
We can assume that 
$r_0\leq \frac{\pi}{4\sqrt{\Lambda}}$.
Let $p_n^\ast$ be a point in the closure of $\wt D_n$ where the function 
$f: \wt D_n \rightarrow \R$ defined by 
$f(x)= \vert A_n(x)\vert \min \bigl\{  d_{\wt D_n }(x,\partial \wt D_n),
r_0\bigr\}$ reaches its
maximum. Note that $p_n^\ast$ is an interior point of $\wt D_n$ since 
$f\equiv 0$ on $\partial \wt D_n$.

One can check that $f(\vec 0_n) \geq r_0 \frac{\sqrt{\Lambda}}{\pi} n$. As 
$f(\vec 0_n) \leq f(p_n^\ast) \leq \vert A_n(p_n^\ast)\vert r_0$ we get 
\begin{equation}\label{Eq.ineg1}
\vert A_n(p_n^\ast)\vert \geq \frac{\sqrt{\Lambda}}{\pi} n.
\end{equation}

Put $\rho_n = \min \bigl\{d_{\wt D_n}(p_n^\ast,\partial \wt D_n), r_0\bigr\}$. 
 Consider the geodesic disk 
$D_n^\ast \subset \wt D_n$ of radius $\rho_n/2$
centered at 
$p_n^\ast$.

Consider for any $n \in \Ne$ the harmonic chart of $M_n$,
$(U_n, \varphi_n, B(p_n^\ast, r_0))$, given by 
Theorem \ref{T.cartes harmoniques},
where $U_n\subset \R^3$ is an open neighborhood of the origin
and $\varphi_n (0)=p_n^\ast$.
By the property (\ref{quadratique}) the Euclidean ball 
 of radius $r_0/Q_0 $ centered at the 
origin, $B_e (0,r_0/Q_0 )$, 
 is contained in $U_n$ for any $n$.  

Call $\lambda_n = \vert A_n(p_n^\ast)\vert$, by 
(\ref{Eq.ineg1}), 
 $\lambda_n \to +\infty$. 
Since $\lambda_n = \vert A_n(p_n^\ast)\vert \geq H_n$, the
sequence $H_n /\lambda_n$ is bounded and so, up to taking a subsequence, we can
assume that 
\begin{equation}\label{Eq.limiteH}
\frac{H_n}{\lambda_n} \to H^\ast < \infty.
\end{equation}
We can also assume that
 $(\lambda_n)$ is nondecreasing. Let 
$F_n$ be the homothety of $\m3$ of ratio $1/\lambda_n$. Put
$V_n =F_n^{-1} (U_n)$ and observe that 
$B_e (0,\lambda_n r_0/Q_0 ) \subset V_n \subset B_e (0,Q_0 \lambda_n r_0)$ for
any
$n$. Therefore $\cup_n V_n =\m3$ and,
up to taking a subsequence, we can assume that the sequence
$(V_n) $ is monotone for the inclusion.

We call $(x_1,x_2,x_3)$ the cartesian coordinates of $x\in U_n$ and 
$(y_1,y_2,y_3)$ those of $y=F_n^{-1}(x)\in V_n$.
Now we use a blow-up argument,
we endow $V_n$ with the metric $h_n=\lambda_n^2 F_n^\ast ( \varphi_n^\ast
g_n)$. 
In the coordinates $(x_1,x_2,x_3)$ the metric $ \varphi_n^\ast g_n $ reads as 
\begin{equation*}
 (\varphi_n^\ast g_n)(x) =\sum _{i,j} g_{ij}^n (x) dx_i dx_j .
\end{equation*}

For any $C^{1,\alpha}$-function $w$ on an open set $\Omega \subset \R^3$
we set:
\begin{equation*}
 \Vert w\Vert_{C^{1,\alpha}(\Omega)} = 
\Vert w \Vert_\infty  + \sum_{k=1}^3 \sup_{y\in \Omega} 
\Big\vert {\partial_k w}(y) \Big\vert +
\sum_{k=1}^3 \sup_{y\not= z} \frac{\big\vert  {\partial_k w}(y)
- {\partial_k w}(z)   \big\vert}{\vert y-z \vert^\alpha} .
\end{equation*}

Observe that, because of property (\ref{quadratique}) of Theorem 
\ref{T.cartes harmoniques}, up to passing to a subsequence, we can assume 
that the sequence of inner products $(g_{ij}^n (0))$ converges to some
inner product on $\R^3$. Up to a linear change of coordinates we can assume
that this limit is the Euclidean inner product $(\delta_{ij})$.

From properties (\ref{quadratique}) and (\ref{norme})  of Theorem 
\ref{T.cartes harmoniques}, there exists a constant
$Q>1$ such that :
\begin{equation}\label{Eq.estimees}
 \Vert g_{ij}^n  \Vert_{C^{1,\alpha}(U_n)} \leq Q .
\end{equation}

For each $n$ and each $y\in V_n$ we
have 
\begin{equation*}
 h_n (y)=\sum _{i,j} g_{ij}^n (\frac{y}{\lambda_n}) dy_i dy_j .
\end{equation*}
Thus the components $h_{ij}^n$ of $h_n$ in the coordinates 
$y=(y_1,y_2,y_3)$
are given by $h_{ij}^n(y)=g_{ij}^n (\frac{y}{\lambda_n}) $ 
for any $y\in V_n$. 
Observe that $\frac{\partial h_{ij}^n}{\partial y_k}(y)=
\frac{1}{\lambda_n}\frac{\partial g_{ij}^n}{\partial
x_k}(\frac{y}{\lambda_n})$, so we get on $V_n$:
\begin{equation}
\Big\vert \frac{\partial h_{ij}^n}{\partial y_k} \Big\vert \leq 
\frac{Q}{\lambda_n}, \quad k=1,2,3.
\end{equation}

 Therefore we have for any $y\in V_n$ 
\begin{equation}
 \vert h_{ij}^n (y)-\delta_{ij} \vert\leq
\sqrt{3}\frac{Q}{\lambda_n} \vert y \vert  +
\vert h_{ij}^n (0)-\delta_{ij} \vert .
\end{equation}

\bigskip 

Thus on any Euclidean ball $B_R\subset V_n$ of radius $R>0$ centered at the
origin, we
get 
\begin{equation}
 \Vert h_{ij}^n -\delta_{ij} \Vert_{C^{1,\alpha}(B_R)} \leq
\sqrt{3}\frac{Q}{\lambda_n}  R + 3\frac{Q}{\lambda_n}
+\frac{Q^{1+\alpha}}{\lambda_n^{1+\alpha}} +\vert h_{ij}^n (0)-\delta_{ij}
\vert .
\end{equation}
It follows that the sequence $(V_n, h_n)$ converges uniformly on compact
subsets of $\m3$ for
the $C^{1,\alpha}$-Euclidean topology to $(\m3, g_{euc})$, where $g_{euc}$
stands for the Euclidean metric. 

Composing with the diffeomorphism $ (\varphi_n \circ F_n)^{-1}$ we view 
$D_n^\ast$ as immersed in $V_n$. 
Note that the image $ (\varphi_n \circ F_n)^{-1}(p_n^\ast)=0\in \m3$ for any
$n$ and that $D_n^\ast$ is a geodesic disk of radius  $\lambda_n \rho_n/2$
centered at $p_n^\ast$ for the metric induced by $h_n$.

Note that $\lambda_n \rho_n=f(p_n^\ast) \geq f(\vec 0_n) \geq r_0 
\frac{\sqrt{\Lambda}}{\pi} n$, thus
\begin{equation}\label{Eq.complete}
 \lambda_n \rho_n \to +\infty .
\end{equation}

For $x \in D_n^\ast$ we have 
$d_{\wt D_n}(p_n^\ast, \partial  \wt D_n)\leq 2 
d_{\wt D_n}(x, \partial \wt D_n)$. Since $f(x)\leq f(p_n^\ast)$ 
we deduce that the second fundamental form $ A_n^\ast$ of
$D_n^\ast$ in $(V_n,h_n)$ satisfies 
\begin{equation}\label{Eq.inegalite2}
\vert A_n^\ast (x)\vert =\frac{\vert A_n (x)\vert}{\lambda_n} \leq 
\frac{\min \bigl\{d_{\wt D_n}(p_n^\ast,\partial \wt D_n), r_0\bigr\}}
{\min \bigl\{d_{\wt D_n}(x,\partial \wt D_n), r_0\bigr\}} \leq 2.
\end{equation}
It is important to observe that
\begin{equation}\label{Eq.nonflat}
 \vert  A_n^\ast (p_n^\ast) \vert =1
\end{equation}
for any $n\in \Ne$.

Let us call $\text{II}_n$ the second fundamental form of $D_n^\ast$ 
in $(\m3,g_{euc})$.

Given $m\in \Ne$ denote by $\Delta_{n,m}$ the connected component of 
$D_n^\ast \cap B_m$ passing through 0 (that is, containing $p_n^\ast$).
As the sequence of metrics $h_n$ converges 
on compact sets of $\m3$ to $g_{euc}$ for the $C^{1,\alpha}$-Euclidean topology,
we deduce from Proposition \ref{P.geometrie} in the appendix that 
there
exists $n_m\in \Ne$ such that for any $n\geq n_m$ and any $x\in \Delta_{n,m}$,
we have 
\begin{equation*}
\Big\vert \vert \text{II}_n (x)\vert - \vert A_n^\ast (x)\vert \Big\vert \leq
\frac{1}{m} .
\end{equation*}
Furthermore we have 
$ d_{ \Delta_{n,m} }(p_n^\ast, \partial \Delta_{n,m})\geq m$,
if $n_m$ is big enough,
since $\lambda_n \rho_n\to +\infty$. Observe that if $m^\prime >m$
 then $\Delta_{n,m}\subset  \Delta_{n,m^\prime}$. Moreover we can assume that
the sequence $(n_m)$ is increasing. We set $\Delta_m=\Delta_{n_m,m}$. Thus, 
we have constructed 
in this way a sequence of connected
surfaces $\Delta_n\looparrowright \m3$ passing through 0 (that is, containing
$p_n^\ast$) with the following property:  
for any $k\in \Ne$ there
exists $n_k$ such that for any $n\geq n_k$ and any $x\in \Delta_n\cap B_k$ we
have 
\begin{equation}\label{Eq.inegalite3} 
\Big\vert \vert \text{II}_n (x)\vert - \vert A_n^\ast (x)\vert \Big\vert \leq
\frac{1}{k} \quad \text{and} \quad
d_{\Delta_n} (p_n^\ast,\partial \Delta_n)\geq k.
\end{equation}
In particular we get for $n\geq n_k$ 
\begin{equation}
 \Big\vert \vert \text{II}_n(p_n^\ast)\vert -1 \Big\vert \leq \frac{1}{k}.
\end{equation}
Furthermore,
for any $k$, any $n\geq n_k$ and any 
$x\in \Delta_n \cap B_k$ we get from  
(\ref{Eq.inegalite2}) and
(\ref{Eq.inegalite3}):
\begin{equation}\label{Eq.estimee}
 \vert \text{II}_n (x) \vert \leq 5.
\end{equation}

We will use the following well known result which can be deduced from
Colding-Minicozzi \cite{[C-M]} or Perez-Ros \cite{[P-R]}.

\begin{prop}\label{P.graph} 
Let $\Sigma \looparrowright \m3$ be an immersed surface whose 
second fundamental form $A$ satisfies
$\vert A \vert <\frac{1}{4\delta}$ for  some constant $\delta >0$.
Then 
for any $x\in \Sigma$ with $d_\Sigma (x,\partial \Sigma)>4\delta$ there
is 
a neighborhood of $x$ in $\Sigma$ which is a graph of a function $u$ over the 
Euclidean disk of radius $\sqrt{2}\delta$ centered at $x$ in the tangent plane
of
$\Sigma$ at $x$. Moreover $u$ satisfies 
\begin{equation}\label{Eq.estimates}
\vert u \vert <2 \delta,\quad 
 \vert \nabla u \vert <1 \quad \text{and} \quad 
\vert \nabla^2 u \vert < \frac{1}{\delta} .
\end{equation}

\end{prop}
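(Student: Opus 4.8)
The statement is a quantitative local graph representation for surfaces with small second fundamental form, and the natural route is the inverse/implicit function theorem applied to the normal exponential map, made quantitative via the curvature bound. I would first reduce to the model situation: fix $x\in\Sigma$, choose Euclidean coordinates so that $x$ is the origin and the tangent plane $T_x\Sigma$ is the horizontal plane $\{z=0\}$, with $N(x)$ vertical. The claim is then that the piece of $\Sigma$ near $x$ is the vertical graph of a function $u$ on the disk $D:=\{(y_1,y_2):y_1^2+y_2^2<2\delta^2\}$, with the stated bounds on $|u|,|\nabla u|,|\nabla^2 u|$.

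The key step is to control how fast the tangent plane tilts as one moves along $\Sigma$ away from $x$. Parametrize $\Sigma$ near $x$ by arclength-type curves emanating from $x$; along any unit-speed curve $\gamma$ in $\Sigma$, the normal $N\circ\gamma$ satisfies $|(N\circ\gamma)'| = |A(\gamma',\cdot)| \le |A| < \frac{1}{4\delta}$ (this is the shape operator bound), so after arclength $s\le c\delta$ the unit normal has rotated by an angle at most $\frac{s}{4\delta} < \frac14$; hence on a geodesic disk of radius a small multiple of $\delta$ around $x$ (which is contained in $\Sigma$ because $d_\Sigma(x,\partial\Sigma)>4\delta$, so there is room), the angle between $T_p\Sigma$ and $T_x\Sigma$ stays below, say, $\frac14$. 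This uniform transversality means the orthogonal projection $\pi:\Sigma\to T_x\Sigma=\{z=0\}$ restricted to that neighborhood is a local diffeomorphism with uniformly bounded inverse; a standard covering/monodromy argument (using that the neighborhood is a graph-candidate disk and $\pi$ is a local diffeo with derivative bounded below) upgrades this to: the component of $\Sigma$ over the disk $D$ of radius $\sqrt2\,\delta$ is a single graph $z=u(y_1,y_2)$. I would then note that $|\nabla u| = |\tan(\text{angle})|$ is bounded by the tangent of $\frac14$, which is $<1$; that $|u(0)|=0$ plus $|\nabla u|<1$ on a disk of radius $\sqrt2\delta$ forces $|u|<\sqrt2\delta<2\delta$; and that the Hessian bound $|\nabla^2 u|<\frac1\delta$ follows from expressing the second fundamental form of a graph in terms of $\nabla^2 u$ and $\nabla u$ — since $|A|<\frac{1}{4\delta}$ and $|\nabla u|$ is bounded, the relation $A = \frac{\nabla^2 u}{\sqrt{1+|\nabla u|^2}}$ (in an orthonormal frame) gives $|\nabla^2 u| \le \sqrt{1+|\nabla u|^2}\,|A| < \sqrt2\cdot\frac{1}{4\delta}\cdot(\text{const})<\frac1\delta$ after keeping track of the frame constants.

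I expect the main obstacle to be the \emph{globalization from local to a single graph over a disk of the prescribed radius $\sqrt2\,\delta$}: the infinitesimal tilt estimate is easy, but one must verify that the region of $\Sigma$ that projects over $D$ is connected, is entirely within intrinsic distance $<4\delta$ of $x$ (so that no boundary is hit, using $d_\Sigma(x,\partial\Sigma)>4\delta$), and does not "fold back" — i.e. that $\pi$ is injective there, not merely a local diffeomorphism. This is exactly the kind of argument codified in Colding--Minicozzi and Perez--Ros, so rather than redo it I would invoke \cite{[C-M]} or \cite{[P-R]}: one takes the maximal radius $\rho$ up to which $\Sigma$ is a graph over the $\rho$-disk with $|\nabla u|<1$, shows by the tilt estimate that the graph extends a definite amount whenever $\rho< \sqrt2\,\delta$ (because the angle has not yet reached the borderline and no boundary point is within reach), and concludes $\rho\ge\sqrt2\,\delta$. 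The constants ($\sqrt2$, the factor $5$ appearing upstream, etc.) are then just bookkeeping in these inequalities.
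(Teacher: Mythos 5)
Your proposal is correct and, at bottom, takes the same route as the paper: the paper gives no proof of this proposition, simply observing that it ``can be deduced from Colding--Minicozzi \cite{[C-M]} or Perez--Ros \cite{[P-R]},'' and you likewise defer the continuation/monodromy step to those same references after sketching the tilt estimate $\bigl|\bigl(N\circ\gamma\bigr)'\bigr|\le|A|$ and the resulting bounds on $|\nabla u|$, $|u|$, $|\nabla^2 u|$. Your sketch is sound (the numerology $\sqrt2\rho/(4\delta)\le\tfrac12$, $\tan\tfrac12<1$, and $(1+|\nabla u|^2)^{3/2}|A|<1/\delta$ all check out), with only a minor imprecision in writing $A=\nabla^2 u/\sqrt{1+|\nabla u|^2}$ ``in an orthonormal frame''---that is the coordinate expression for $A_{ij}$, and the correct orthonormal-frame comparison is $|\nabla^2 u|\le(1+|\nabla u|^2)^{3/2}|A|$, which you effectively acknowledge by invoking ``frame constants.''
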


\medskip

We deduce from the estimate (\ref{Eq.estimee}) and Proposition \ref{P.graph}
that for each $n $ big enough, a part of the surface $\Delta_n$  is the
graph of a function $u_n$ over a Euclidean disk of radius $\sqrt{2}\delta$,
with $\delta =1/20$,
centered at the origin in the tangent plane of $\Delta_n$ at the origin.
Furthermore the functions $u_n$ satisfy the uniform estimates
(\ref{Eq.estimates}). 
Up to passing to a subsequence, still denoted $\Delta_n$,
and up to a rotation in $\R^3$, we can assume that the tangent planes $T_0
\Delta_n$ converge to the horizontal plane $P$ through the origin.
Consequently, 
for $n$ big enough, a part of $\Delta_n$  is the graph of a
function still denoted $u_n$,  over the Euclidean disk $D_\delta$
of radius
$\delta$ centered at the origin in $P$. By continuity,
note that the new functions $u_n$ satisfy the following uniform estimates for
$n$ big enough:
\begin{equation}\label{Eq.new estimates}
\vert u_n \vert < 3 \delta,\quad 
 \vert \nabla u_n \vert <2 \quad \text{and} \quad 
\vert \nabla^2 u_n \vert < \frac{1}{\delta} \qquad (\text{with}\ 
\delta=\frac{1}{20}).
\end{equation}
Thus we have obtained uniform $C^2$-estimates for the functions $u_n$ on
$D_\delta$. To go further we
need $C^{2,\alpha}$-estimates, $0<\alpha<1$. This is the content of the next
lemma.

\begin{lem}
For any $\delta^\prime \in (0,\delta)$ there exits a constant 
$C$ which does not depend on $n$ such that for n big enough
we have 
\begin{equation}
  \Vert u_n \Vert_{C^{2,\alpha}(D_{\delta^\prime})} < C,
\end{equation}
where $D_{\delta^\prime}$ denotes the disk in $P$ of radius 
$\delta^\prime$ centered at the origin.
\end{lem}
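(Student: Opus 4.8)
The plan is to observe that $u_n$ satisfies a quasilinear elliptic equation with uniformly controlled coefficients --- namely the prescribed mean curvature equation of the metric $h_n$ --- and then to invoke the interior Schauder estimates for linear elliptic operators with H\"older continuous coefficients. Over $D_\delta$ the surface $\Delta_n$ is the graph of $u_n$ and has constant mean curvature $H_n/\lambda_n$ with respect to $h_n$. Computing the second fundamental form of a graph through the ambient Levi-Civita connection of $h_n$, one checks that the coefficient multiplying each second derivative $\partial_i\partial_j u_n$ involves only the values of $h_n$ and of $\nabla u_n$, while the remaining terms, produced by the Christoffel symbols of $h_n$, involve in addition the \emph{first} derivatives of $h_n$; no second derivatives of $h_n$ ever appear. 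Hence there are functions $a_{ij}$ and $b$, smooth in all their arguments, such that on $D_\delta$
\[
\sum_{i,j=1}^{2} a_{ij}^n(x)\,\partial_i\partial_j u_n(x) = b^n(x),
\]
where $a_{ij}^n(x) = a_{ij}\bigl(\nabla u_n(x), h_n(x)\bigr)$ and $b^n(x) = b\bigl(u_n(x), \nabla u_n(x), h_n(x), \partial h_n(x), H_n/\lambda_n\bigr)$, the metric $h_n$ and its first derivatives being evaluated along the graph of $u_n$.

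Next I would check that all the data are uniformly controlled in $n$. By the estimates (\ref{Eq.new estimates}) we have $\vert u_n\vert < 3\delta$, $\vert\nabla u_n\vert < 2$ and $\vert\nabla^2 u_n\vert < 1/\delta$ on $D_\delta$; in particular $\nabla u_n$ is uniformly Lipschitz, hence uniformly bounded in $C^{0,\alpha}(D_\delta)$, and the graph of $u_n$ stays in a fixed compact subset $K$ of $\m3$, which is contained in $V_n$ for $n$ large. By Theorem \ref{T.cartes harmoniques} (properties (\ref{quadratique}) and (\ref{norme})) together with the convergence of $(V_n, h_n)$ to $(\m3, g_{euc})$ in the $C^{1,\alpha}$ topology, the components $h_{ij}^n$ are uniformly bounded in $C^{1,\alpha}(K)$ and stay uniformly close to $\delta_{ij}$; consequently the $\partial h_n$ are uniformly bounded in $C^{0,\alpha}(K)$. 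Finally $\vert H_n/\lambda_n\vert$ is bounded by (\ref{Eq.limiteH}). Since $a_{ij}$ and $b$ are smooth, substituting these uniformly $C^{0,\alpha}$-bounded arguments yields
\[
\Vert a_{ij}^n\Vert_{C^{0,\alpha}(D_\delta)} + \Vert b^n\Vert_{C^{0,\alpha}(D_\delta)} \le C
\]
with $C$ independent of $n$. Moreover, because $\vert\nabla u_n\vert < 2$ and $h_n$ is uniformly close to $g_{euc}$, the matrix $\bigl(a_{ij}^n(x)\bigr)$ has eigenvalues lying in a fixed compact subinterval of $(0,+\infty)$ for all $n$ and all $x\in D_\delta$; that is, the operators $L_n:=\sum_{i,j}a_{ij}^n\,\partial_i\partial_j$ are uniformly elliptic on $D_\delta$.

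Then, applying the interior Schauder estimate to the linear equation $L_n u_n = b^n$ on $D_\delta$, for any $\delta^\prime\in(0,\delta)$ one gets
\[
\Vert u_n\Vert_{C^{2,\alpha}(D_{\delta^\prime})} \le C\bigl(\Vert u_n\Vert_{C^0(D_\delta)} + \Vert b^n\Vert_{C^{0,\alpha}(D_\delta)}\bigr),
\]
where $C$ depends only on $\delta$, $\delta^\prime$, $\alpha$, the ellipticity constants of $L_n$ and the $C^{0,\alpha}$-norms of the $a_{ij}^n$, all of which are uniform in $n$ by the previous step. Combined with the uniform bounds on $\Vert u_n\Vert_{C^0(D_\delta)}$ and $\Vert b^n\Vert_{C^{0,\alpha}(D_\delta)}$, this gives the asserted bound.

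The step I expect to be the main obstacle is the bookkeeping in the two middle paragraphs: one must make sure that the mean curvature operator involves the ambient metric only to first order, so that the $C^{1,\alpha}$-control of $h_n$ provided by harmonic coordinates (and, as the example at the beginning of this section shows, \emph{not} available in geodesic normal coordinates) is enough to bound $b^n$ in $C^{0,\alpha}$, and that uniform ellipticity is not lost --- which rests on the a priori gradient bound $\vert\nabla u_n\vert < 2$ from (\ref{Eq.new estimates}). Granting these points, the statement is a routine application of elliptic regularity.
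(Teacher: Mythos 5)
Your argument is essentially the paper's proof: both write the constant-mean-curvature condition for $u_n$ with respect to $h_n$ as a quasilinear second-order elliptic PDE whose coefficients depend only on $u_n$, $\nabla u_n$, $h_n$, and $\partial h_n$ (hence on no more than the first derivatives of the metric), observe that the uniform $C^2$ bound (\ref{Eq.new estimates}), the $C^{1,\alpha}$-convergence $h_n \to g_{euc}$, and the boundedness of $H_n/\lambda_n$ give uniform $C^{0,\alpha}$ control of the coefficients, and then invoke interior Schauder estimates on $D_{\delta'} \subset D_\delta$. Your explicit check of uniform ellipticity via $\vert\nabla u_n\vert<2$, and your remark that only first derivatives of the metric enter (which is precisely what makes harmonic rather than normal coordinates indispensable), are both consistent with, and indeed sharpen, what the paper takes for granted.
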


\begin{proof}
Since the mean curvature of $\Delta_n$ for the metric $h_n$ is 
$H_n/\lambda_n$, we infer, see for instance Colding-Minicozzi 
\cite[p. 99--100]{[C-M]}, 
that the function $u_n$ is solution of 
an elliptic PDE of the form:
\begin{equation}\label{Eq.mean curvature}
L(u):=a^{ij}(u,\nabla u,h_n)u_{ij} 
+b^i (u,\nabla u, h_n,{\partial_m h_n})u_i =
2\frac{H_n}{\lambda _n}W(\nabla u, h_n) +
c(u,\nabla u, h_n,
{\partial_m h_n}),
\end{equation}
where by $h_n$ we mean the components $(h_n)_{\alpha \beta}$ of the metric
$h_n$ and by ${\partial_m h_n}$ we mean the partial derivatives
$\frac{\partial (h_n)_{\alpha \beta}}{\partial y_m}$, 
$1\leq \alpha, \beta, m \leq 3$. Moreover the functions $a^{ij}, b^i$, 
$W$ and $c$ depend in a smooth way on their arguments.

Recall that the sequence $(u_n)$ has uniform $C^2$-estimates on 
 $D_{\delta}$ and so has uniform $C^{1,\alpha}$-estimates. Moreover the
sequence of metrics $(h_n)$ converges to the Euclidean metric $g_{euc}$ for the 
$C^{1,\alpha}$-topology on compact sets.

Recall also that 
$H_n /\lambda_n$ is bounded, see (\ref{Eq.limiteH}).
 Thus the functions
$a^{ij}(u,\nabla u,h_n)$, $b^i (u,\nabla u, h_n,
{\partial_m h_n})$ and 
$2({H_n}/{\lambda _n})W(\nabla u, h_n) +
c(u,\nabla u, h_n,
{\partial_m h_n})$ have uniform $C^{0,\alpha}$-estimates
on  $D_{\delta}$, that is,  $C^{0,\alpha}$-estimates independant on $n$.

 By Schauder estimates, see Gilbarg-Trudinger \cite[Chapter 6]{[Gi-Tr]} or
 Petersen \cite[Chapter 10]{[Pet]},
for any 
$\delta^\prime \in (0,\delta)$, there exists a constant $C_0>0$ which does not
depend on $n$ such that:
\begin{equation}
   \Vert u_n \Vert_{C^{2,\alpha}(D_{\delta^\prime})}
< C_0 \Big( \Vert L(u_n) \Vert _{C^\alpha (D_{\delta})} 
+\Vert u_n \Vert_{C^\alpha (D_{\delta})}  \Big).
\end{equation}
 Consequently, there exists a constant $C>0$ which does not depend on $n$
such that: 
\begin{equation}
  \Vert u_n \Vert_{C^{2,\alpha}(D_{\delta^\prime})} < C,
\end{equation}
which concludes the proof of the lemma.
\end{proof}

Fix some $\delta^\prime \in (0,\delta)$. As the sequence $(u_n)$ is 
$C^{2,\alpha}$-bounded on the disk $D_{ \delta^\prime }$, by 
Arzela-Ascoli's theorem there is some subsequence, still denoted $(u_n)$, which
converges to a $C^2$-function $u$ in the $C^2$-topology. 
Since the mean curvature, $H_n/\lambda_n$, of the graph of $u_n$ for the metric 
$h_n$ tends to $H^\ast$, see (\ref{Eq.limiteH}), thanks to the Proposition
\ref{P.geometrie} we know that the
mean curvature of
the graph of $u_n$ for the Euclidean metric also tends to $H^\ast$. We deduce 
that the graph of $u$, denoted by $S$, is an $H^\ast$-surface for the
Euclidean metric. 

Note that this graph 
contains the origin and that its second fundamental form $A$ verifies: 
$\vert A(0) \vert =1$ and   $\vert A \vert <5$  on 
$D_{ \delta^\prime }$ 
(by (\ref{Eq.inegalite2}), (\ref{Eq.nonflat}) and Proposition
\ref{P.geometrie}).

\medskip 

Let $x_0\in D_{ \delta^\prime }$ at Euclidean distance $\delta^\prime/2$ 
from 0 and fix once for all some  
$\delta^{\prime \prime}\in (\delta^\prime, \delta)$.
Note that for $n$ big enough, a part of the surface $\Delta_n$ is the graph
of a function $v_n$ over 
the Euclidean disk $D_{\delta ^{\prime \prime}}(x_0)$ centered at $(x_0,
u(x_0))\in S$ in the tangent plane 
$T_{(x_0, u(x_0))} S$. Using the same arguments as before, we can extract a
subsequence of $(v_n)$ whose graphs converge to a $H^\ast$-graph over
the disk 
$D_{\delta ^{\prime }}(x_0)$. Observe that this new graph is not contained in
$S$ because $\vert \nabla u \vert \leq 1$. We have in this way obtained a new
$H^\ast$-surface extending $S$.
We will still 
denote this extended  $H^\ast$-surface by $S$.

As $d_{\Delta_n}(0,\partial \Delta_n)\to +\infty$, using a standard 
diagonal process, we obtain a complete $H^\ast$-surface $S$ immersed in
$\R^3$, 
passing through the origin and whose second fundamental form $A$ satisfies:
\begin{equation}\label{Eq.contradiction}
 \vert A(0) \vert =1 \quad \text{and} \quad  \vert A \vert \leq 5.
\end{equation}

Consider the universal covering $\wt S$ of $S$, observe that 
$\wt S$ is naturally immersed in $\R^3$ and its second fundamental form 
is bounded. Consequently, 
there exists an $\varepsilon >0$ such that 
the map $\Pi: W:= \wt S \times (-\varepsilon,+\varepsilon) \rightarrow \R^3$
given by
$\Pi (p,t)=p+ t \wt N(p)$ is an immersion, where $\wt N$ is the Gauss map 
of $\wt S$. Therefore we can endow $W$ with a flat metric which makes $\Pi$ a
local
isometry. Note that $\wt S$ is a complete $H^\ast$-surface in $W$.

\medskip

\noindent {\bf Claim:} $\wt S$ is stable in $W$.

For any geodesic disk in $S$ of radius $\delta^\prime$, for $n$ big
enough, a piece of $\Delta_n$ is a graph over this geodesic disk of $S$. By
construction of $S$, using this fact,  for any 
compact and simply connected domain $U$ in $ S$,  a piece of $\Delta_n$
will be a graph over $U$ for $n$ big enough.
Using  a continuation
argument, for any  
compact and simply connected domain $\wt U$ in $ \wt S$,
 we can lift a part $G_n$ of $\Delta_n$, $n$ big enough, to get a surface 
$\wt G_n$ in $W$ 
which will be a graph over $\wt U$.
Moreover the graphs $\wt G_n$  converge to $\wt U$ in the $C^2$-topology.

Observe that each graph $\wt G_n$ is a stable $H$-surface (with
$H=H_n/\lambda_n$) in (a part of) $W$
endowed with the metric $\Pi^\ast (h_n)$.
Indeed, since $ G_n$ is two-sided and
 stable for the metric $h_n$, 
by a result of Fischer-Colbrie and Schoen \cite{[FC-Sc]}, there exists a
positive Jacobi function $v_n$ on $G_n$. The function 
$\wt v_n =v_n \circ \Pi$ is thus a positive Jacobi function on 
$\wt G_n$  for the metric $\Pi^\ast (h_n)$. Again by \cite{[FC-Sc]}
this implies that $\wt G_n$ is stable. 

For $n$ big enough, $\wt G_n$ is the graph over $\wt U$ of a
function which tends to 0 in the $C^2$-norm as $n$ goes to $+\infty$. In this
way we may see the stability operator of $\wt G_n$: 
$J_n= \Delta_n + \vert \wt{B}_n^* \vert^2 + \text{Ric}_n(\nu)$ as an 
operator on $\wt U$. Here $\wt{B}_n^* $ stands for the second fundamental form 
of $\wt G_n$, 
$\Delta_n$ denotes the Laplacian on $\wt G_n$ and $ \text{Ric}_n(\nu)$ denotes
the Ricci curvature in the direction of 
the unit normal field $\nu$ to $\wt G_n$, all of them with respect to
the metric $\Pi^* (h_n)$.

 As the metrics $\Pi^\ast (h_n)$ 
converge to the flat metric $\Pi^\ast (g_{euc})$ in the 
$C^{1,\alpha}$-topology and 
$\vert \text{Ric}_n(\nu)\vert  \leq 2\Lambda /\lambda_n^2$,
it follows that the domain $\wt U$ is stable for the
flat metric. This implies that $\wt S$ is stable in $W$ for the flat metric as
claimed.

\medskip 

As the immersion $\Pi$ is a local isometry we infer that the 
complete immersion $\wt S \rightarrow S\subset \R^3$  is stable. 
Thanks to results of do Carmo-Peng \cite{[DoC-Pe]},
Fischer-Colbrie and Schoen \cite{[FC-Sc]}, Pogorelov \cite{[Pog]}, 
Lopez-Ros \cite{[Lopez-Ros]} and Silveira \cite{[Sil]},  
we know
that $S$ is a plane. This contradicts the fact that $\vert A(0)\vert =1$, 
see (\ref{Eq.contradiction}).

It remains to check that $C(\Lambda)$ can be chosen independant of 
$\Lambda$. 

First we can assume that $C(\Lambda)$ is the infimum among the constants
satisfying the conclusion of the theorem. Let $\Sigma$ be any 
stable $H$-surface in $(M,g)$. Observe that $\Sigma$  is a stable 
$\frac{H}{\tau}$-surface in $(M,\tau^2 g)$ for any $\tau >0$ and that the
quantity $\vert A(p) \vert {\min \bigl\{ d(p,\partial \Sigma),
\frac{\pi}{2\sqrt{\Lambda}}\bigr\}}$ is scale invariant. It follows that 
$C(\Lambda)=C(\Lambda/\tau^2)$ for any $\tau >0$ and so $C(\Lambda)$ does not
depend on $\Lambda$, which concludes the proof of the Main Theorem. 
\qed 

\bigskip

The same proof gives the
following {\em local} result.

\begin{thm}\label{T.local}
 Let $(M,g)$ be a smooth Riemannian 3-manifold 
$($not necessarily complete$)$, 
with
bounded sectional curvature 
$\vert K \vert \leq \Lambda < +\infty$. Let
$\Omega$ be an open subset of $M$ such that there exists $\delta>0$
for which $ \Omega (\delta)$ is relatively compact in $M$ 
$($cf. Theorem $\ref{T.cartes harmoniques}$ for the notation$)$.

 Then 
there exists a constant $C= C(\delta^2 \Lambda)>0$ 
depending only on the product  $\delta^2\Lambda$ and neither on $M$
nor on $\Omega$, 
satisfying the following:

For any immersed  stable $H$-surface
$\Sigma \looparrowright \Omega$,
with trivial normal bundle, 
and for any $p\in \Sigma$ 
we have
\begin{equation*}
 \vert A(p) \vert < \frac{C}{\min \bigl\{ d(p,\partial \Sigma),
\frac{\pi}{2\sqrt{\Lambda}}, \delta\bigr\}}.
\end{equation*}
\end{thm}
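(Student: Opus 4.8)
The plan is to re-run the proof of the Main Theorem almost verbatim, checking only that the single place where completeness of $M$ was used -- namely the application of Lemma \ref{L.injectivity}, which needs the exponential map $\exp_x$ to be defined on a ball of radius $\pi/\sqrt{\Lambda}$ -- can be replaced by the compactness hypothesis on $\Omega(\delta)$ together with the scaling that makes everything depend only on $\delta^2\Lambda$. So I would first reduce to a normalized situation: by the scale invariance already observed at the end of the Main Theorem's proof ($\Sigma$ is a stable $H/\tau$-surface in $(M,\tau^2 g)$, and $|A(p)|\min\{d(p,\partial\Sigma),\pi/(2\sqrt\Lambda),\delta\}$ is scale invariant under $g\mapsto\tau^2 g$ with $\Lambda\mapsto\Lambda/\tau^2$, $\delta\mapsto\tau\delta$), I may rescale so that, say, $\delta\sqrt{\Lambda}$ equals a fixed convenient value; this is why the final constant depends only on the product $\delta^2\Lambda$. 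Everything after that is local: all the charts, blow-ups and surfaces live in geodesic balls of radius at most $r_0$ around points $p_n^*$ that lie in $\Sigma\subset\Omega$, hence in $\Omega(\delta)$.

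Next I would argue by contradiction exactly as before: suppose there is a sequence $(M_n,g_n,\Omega_n,\Sigma_n,p_n)$ with $|K|\le\Lambda$, $\Omega_n(\delta)$ relatively compact, $\Sigma_n\looparrowright\Omega_n$ stable $H_n$-surfaces, and $|A_n(p_n)|\min\{d_{\Sigma_n}(p_n,\partial\Sigma_n),\pi/(2\sqrt\Lambda),\delta\}\ge n$. The key point is that since $\Omega_n(\delta)$ is relatively compact in $M_n$ and has positive distance $\delta$ to the complement of a compact set, there is a uniform lower bound $i_0>0$ on the injectivity radius of $M_n$ on $\overline{\Omega_n(\delta/2)}$; indeed, a standard compactness argument (or Cheeger's lemma adapted using the bound $|K|\le\Lambda$ and the fact that $\Omega_n(\delta)$ sits inside a compact set at distance $\geq\delta/2$ from $\partial$) gives such an $i_0$ depending only on $\delta$ and $\Lambda$ — here is where I would instead simply invoke Lemma \ref{L.injectivity} applied to the pull-back metric $\exp_{p_n}^*g_n$ on $B_{\vec 0_n}(\pi/\sqrt\Lambda)$, which already handles the local situation with the explicit constant $\pi/(4\sqrt\Lambda)$ and does not require completeness of $M$, only that $\exp_x$ be a local diffeomorphism on $B_{\vec 0}(\pi/\sqrt\Lambda)$, which holds since $|K|\le\Lambda$. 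Because $p_n\in\Sigma_n\subset\Omega_n$ and we work inside $B(p_n,\pi/(8\sqrt\Lambda))$, provided $\pi/(8\sqrt\Lambda)<\delta$ (arranged by the initial rescaling, or else the $\delta$-term in the $\min$ is the active one and the estimate is even easier), this ball lies in $\Omega_n(\delta)$ and all the objects $\wt D_n$, $D_n^*$, the harmonic charts from Theorem \ref{T.cartes harmoniques}, the blow-up metrics $h_n$, the rescaled surfaces $\Delta_n$, Proposition \ref{P.graph}, the Schauder estimates, and finally the complete stable $H^*$-surface $S\looparrowright\R^3$ are produced by word-for-word repetition of the Main Theorem's argument.

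The conclusion is then identical: $S$ must be a plane by the classification of complete stable $H$-surfaces in $\R^3$ (do Carmo--Peng, Fischer-Colbrie--Schoen, Pogorelov, Lopez--Ros, Silveira), contradicting $|A(0)|=1$. Finally, as in the Main Theorem, the scale invariance forces the constant to depend only on $\delta^2\Lambda$ rather than on $\delta$ and $\Lambda$ separately, and manifestly not on $M$ or $\Omega$; combining the three quantities $d(p,\partial\Sigma)$, $\pi/(2\sqrt\Lambda)$ and $\delta$ in the $\min$ absorbs all three constraints. I expect the only real subtlety — the ``main obstacle'' — to be bookkeeping: making sure that when $\delta$ is small relative to $\pi/(2\sqrt\Lambda)$ one has enough room to run the harmonic-coordinates construction inside $\Omega(\delta)$, i.e. that the radius $r_0$ from Theorem \ref{T.cartes harmoniques} (which depends on $i$, $\delta$, $\Lambda$, $\alpha$) can be taken $\le\min\{\pi/(4\sqrt\Lambda),\delta\}$; this is handled exactly as in the Main Theorem by shrinking $r_0$, at the cost only of a worse universal constant, and is the reason the final constant depends on the scale-invariant combination $\delta^2\Lambda$.
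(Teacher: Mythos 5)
Your overall strategy is the right one, and it is the same as the paper's (the paper just says ``the same proof gives''): run the blow-up argument of the Main Theorem inside $\Omega$, using the relative compactness of $\Omega(\delta)$ in place of completeness, and observe that the resulting constant is scale invariant, hence depends only on $\delta^2\Lambda$. But two of the sentences in the middle of your write-up are wrong as stated, and one of them is exactly at the point where completeness enters.

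First, you assert that Lemma~\ref{L.injectivity} ``does not require completeness of $M$, only that $\exp_x$ be a local diffeomorphism on $B_{\vec 0}(\frac{\pi}{\sqrt{\Lambda}})$, which holds since $|K|\le\Lambda$.'' The curvature bound $K\le\Lambda$ guarantees absence of conjugate points inside radius $\frac{\pi}{\sqrt{\Lambda}}$, i.e.\ that $\exp_x$ is a local diffeomorphism \emph{wherever it is defined}; it says nothing about $\exp_x$ actually being defined on the whole ball $B_{\vec 0}(\frac{\pi}{\sqrt{\Lambda}})\subset T_xM$. That is precisely what completeness was used for in the lemma. In the local theorem, completeness is replaced by the relative compactness of $\Omega(\delta)$: a unit-speed geodesic starting at $p_n\in\Omega$ stays in $\overline{\Omega(\delta)}$ for time $\le\delta$, hence extends at least to time $\delta$, so $\exp_{p_n}$ is defined (and a local diffeomorphism) on $B_{\vec 0_n}(\rho)$ with $\rho:=\min\{\delta,\frac{\pi}{\sqrt{\Lambda}}\}$ --- but in general on no larger ball. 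This is the content of the unused remark in the source after the bibliography, which replaces $\frac{\pi}{\sqrt\Lambda}$ by $r(x)=\min\{d(x),\frac{\pi}{\sqrt\Lambda}\}$ in Lemma~\ref{L.injectivity}.

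Second, you say that the inequality $\frac{\pi}{8\sqrt\Lambda}<\delta$ can be ``arranged by the initial rescaling.'' It cannot: under $g\mapsto\tau^2g$ one has $\delta\mapsto\tau\delta$ and $\Lambda\mapsto\Lambda/\tau^2$, so the ratio $\delta\sqrt\Lambda$ is \emph{scale invariant} and no rescaling changes which of $\delta$ and $\frac{\pi}{\sqrt\Lambda}$ is smaller. The alternative you offer (``or else the $\delta$-term in the $\min$ is the active one and the estimate is even easier'') is not an argument --- nothing becomes easier, the blow-up and harmonic-chart construction still has to be carried out. What actually has to be done is to replace, systematically, every occurrence of $\frac{\pi}{\sqrt\Lambda}$, $\frac{\pi}{2\sqrt\Lambda}$, $\frac{\pi}{4\sqrt\Lambda}$, $\frac{\pi}{8\sqrt\Lambda}$ in the Main Theorem's proof by $\rho$, $\rho/2$, $\rho/4$, $\rho/8$, with $\rho=\min\{\delta,\frac{\pi}{\sqrt\Lambda}\}$: take $D_n$ of radius $\min\{d_{\Sigma_n}(p_n,\partial\Sigma_n),\rho/2\}$, pull back via $\exp_{p_n}$ to $B_{\vec 0_n}(\rho)$ (which is where $\exp_{p_n}$ exists, by the compactness of $\Omega(\delta)$), apply Lemma~\ref{L.injectivity} on $\overline{B_{\vec 0_n}(\rho/4)}$ to get injectivity radius $\ge\rho/4$ there, and feed $i=\rho/4$, $\delta=\rho/8$ into Theorem~\ref{T.cartes harmoniques} to get $r_0$ and $Q_0$. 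After normalizing $\Lambda=1$ by scaling, these depend only on $\rho\sqrt\Lambda=\min\{\delta\sqrt\Lambda,\pi\}$, hence only on $\delta^2\Lambda$; the rest of the blow-up, Schauder, and Bernstein-type argument is verbatim. (Your bracketed alternative --- a ``standard compactness argument'' giving a uniform lower injectivity bound on $\overline{\Omega_n(\delta/2)}$ across the \emph{sequence} $(M_n,g_n)$ --- does not work either: compactness of $\overline{\Omega_n(\delta/2)}$ gives a positive bound for each fixed $n$, but nothing uniform in $n$ without an extra input, which is precisely what Lemma~\ref{L.injectivity} supplies.)

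With these corrections your proposal does reproduce the intended proof.
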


\section{Applications}

In this section we consider Riemannian 3-manifolds $(M^3,g)$ which
fiber over a Riemannian surface $(M^2,h)$. We assume that the fibration 
$\Pi : (M^3,g) \rightarrow (M^2,h)$ is a Riemannian submersion with the
following properties.

\begin{enumerate}
\item \label{Complete} Each fiber is a complete geodesic  
of infinite length. 
\item \label{Killing} The fibers of the fibration are the integral curves of a
unit Killing vector field $ \xi$ on $M^3$.
\end{enumerate}
A fibration satisfying (\ref{Complete}) and (\ref{Killing}) will be called a
{\em Killing submersion}. It can be shown that such a fibration is 
(topologically) trivial.
 Indeed, there always exists a global section $s : M^2 \rightarrow M^3$
(see Steenrod \cite[Theorem 12.2]{[Steenrod]}).  Considering 
the flow $\varphi_t$ of 
$\xi$, a trivialization of the fibration is given by the diffeomorphism:
$(p,t)\in M^2 \times \R \rightarrow \varphi_t (s(p)) \in M^3$.

Notice that there are many such 3-manifolds, including $\R^3$, 
$\wt{\textrm{PSL}(2,\R)}$ (which fibers over the hyperbolic plane $\hd$), the
Heisenberg space $\text{Nil}_3$ (which fibers over the Euclidean plane $\R^2$)
and the metric product spaces $M^2 \times \R$ for any Riemannian surface 
$(M^2,h)$.

\medskip

\begin{defn}
 Let $\Pi : (M^3,g) \rightarrow (M^2,h)$ be a Killing submersion.
\begin{enumerate}
 \item Let $\Omega \subset M^2$ be a domain. An  $H$-{\em section} over
$\Omega$ is an $H$-surface in $M^3$
which is 
the image of a section $s : \ov \Omega \rightarrow M^3$, with $s$ of class 
$C^2$ on $\Omega$ and $C^0$ on
 $\ov\Omega$.

\item Let $\gamma \subset M^2$ be a smooth curve with geodesic curvature
$2H$. Observe that the surface $\Pi^{-1}(\gamma) \subset M^3$ has mean
curvature $H$. We call  such a surface a {\em vertical} $H$-{\em cylinder}.
\end{enumerate}

 We may also consider
$H$-sections without boundary.
\end{defn}

\begin{rem}\label{R.graphes}
 Let $s :\ov \Omega \rightarrow \Sigma \subset \Omega$ 
be a $H$-section  where 
$\Omega \subset M^2$ is a  relatively compact domain. 
We make the following observations.
\begin{enumerate}
\item \label{item.transverse} For any interior point $p\in \Omega$ the
$H$-surface 
$\Sigma$ is transversal at  $s(p)$ to the fiber. Indeed, assume by
contradiction that $\Sigma$ is tangent to the fiber at the interior  point
$s(p)$. Let $S\subset M^3$ be the vertical $H$-cylinder tangent to $\Sigma$ at 
$s(p)$ with the same mean curvature vector.
Then, in a neighborhood of $s(p)$, the intersection $\Sigma \cap S$ is
composed of $n\geq 2$ smooth curves passing through $s(p)$. But the union of
those curves cannot be a graph, contradicting the assumption that 
$\Sigma$ is a graph over $\ov\Omega$.

\item \label{item.estimes} Let $s_1 :\ov \Omega \rightarrow \Sigma_1 \subset
\Omega$ 
be another $H$-section over $\ov \Omega$ such that the mean curvature vector 
field $\vec H_1$ points in the same direction as the mean curvature vector 
field $\vec H$ of $\Sigma$, that is, the scalar products 
$g(\vec H, \xi)$ and $g(\vec H_1, \xi)$ have the same sign. Assume that there
exists a constant $C>0$ such that $\vert s(p)-s_1(p) \vert <C$ at any 
$p\in \partial \Omega$ where $\vert s(p)-s_1(p)\vert$ denotes the distance on
the fiber over $p$ between the points $s(p)$ and $s_1(p)$. As $\xi$ is a Killing
field, the translated copies of $\Sigma$ along the fibers are also $H$-surfaces
whose  mean curvature vector  has the same orientation as that 
 of $\Sigma$. 
Then applying the maximum principle to $\Sigma_1$ and a translation of 
$\Sigma$ by $\xi$, 
we deduce  that we
have also $\vert s(p)-s_1(p) \vert <C$ at any $p \in \ov \Omega$. 

This gives for any such $H$-section $\Sigma_1$ 
height estimates,  relative to $\Sigma$, 
depending on the vertical distance between the boundaries of $\Sigma$ and
$\Sigma_1$.
\end{enumerate}
\end{rem}

Our first application is as follows.

\begin{thm}\label{T.courbe}
 Let $\Pi : (M^3,g) \rightarrow (M^2,h)$ be a Killing submersion 
and let  $s: \Omega \rightarrow \Sigma \subset M^3$ be an $H$-section
over a domain $\Omega \subset M^2$. Let $U_0 $ be a neighborhood of an arc 
$\gamma \subset \partial \Omega$ and $s_0 : U_0 \rightarrow M^3$ a section.

Assume that 
 for any sequence $(p_n)$ of $\Omega$ which converges to a point
$p\in \gamma$, the height of $s(p_n)$ goes to $+\infty$, that is 
$s(p_n)-s_0(p_n) \to +\infty$.

 Then, $\gamma$ is  a smooth curve with geodesic curvature $2H$. 
If $H>0$ then
$\gamma$ is convex with respect to $\Omega$ if, and only if,  
the mean curvature vector $\vec{H}$ of $\Sigma$ points up,
that is, if $g(\vec H, \xi)>0$ along $\Sigma$. Moreover,
$\Sigma$ converges to the vertical $H$-cylinder  
$\Pi^{-1}(\gamma)$
 with respect to the $C^k$-topology for any $k\in \N$; this convergence will be
made precise in the proof.
\end{thm}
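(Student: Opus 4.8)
The plan is to use the curvature estimates from the Main Theorem (or rather Theorem \ref{T.local}) to extract a limit surface along the boundary arc $\gamma$ and identify it with the vertical $H$-cylinder $\Pi^{-1}(\gamma)$. First I would fix a point $p\in\gamma$ and a small coordinate neighborhood, and choose a sequence of interior points $p_n\to p$ with heights $t_n:=s(p_n)-s_0(p_n)\to+\infty$. Using the flow $\varphi_{-t_n}$ of the Killing field $\xi$ (which preserves $H$-surfaces and the ambient geometry, since $\xi$ is Killing), I translate $\Sigma$ down by $t_n$, obtaining $H$-surfaces $\Sigma_n:=\varphi_{-t_n}(\Sigma)$ passing through points $q_n:=\varphi_{-t_n}(s(p_n))$ which stay in a fixed compact region of $M^3$. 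Each $\Sigma_n$ is a graph (a section over $\Omega$ translated), hence transverse to $\xi$, hence stable. Applying Theorem \ref{T.local} (with $\Omega$ replaced by a fixed relatively compact neighborhood and $\delta$ fixed), I get a uniform bound $|A_{\Sigma_n}|\le C$ on the part of $\Sigma_n$ lying in any fixed compact set at definite distance from $\partial\Sigma_n$ — and the hypothesis that the height blows up at \emph{every} point of $\gamma$ ensures that for points of $\Sigma_n$ near $q_n$, the distance to $\partial\Sigma_n$ is large. This gives uniform curvature bounds on larger and larger pieces of $\Sigma_n$.

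Next I would run a standard compactness argument: with uniform second fundamental form bounds, pieces of $\Sigma_n$ near $q_n$ are graphs of uniformly $C^{2}$-bounded functions over disks in their tangent planes (by Proposition \ref{P.graph}); the mean curvature equation plus Schauder estimates (as in the lemma proved above for the $u_n$) upgrades this to uniform $C^{2,\alpha}$-bounds; Arzelà–Ascoli and a diagonal process then extract a subsequence converging in $C^{k}$ (for every $k$, after bootstrapping, since the ambient metric is smooth) on compact sets to a complete immersed $H$-surface $S_\infty$ in $M^3$ through $q_\infty:=\lim q_n$, which lies over a neighborhood of $p$ in $M^2$. Because each $\Sigma_n$ is invariant under no vertical translation but its \emph{limit} inherits extra symmetry: for any fixed $a\in\R$, the surface $\varphi_{-a}(\Sigma_n)$ is $\varphi_{-(t_n+a)}(\Sigma)=\Sigma_{n'}$ for a shifted index, so the limit $S_\infty$ satisfies $\varphi_{-a}(S_\infty)=S_\infty$ for all $a$ — that is, $S_\infty$ is invariant under the Killing flow, hence $S_\infty=\Pi^{-1}(\sigma)$ for some curve $\sigma\subset M^2$ through $p$. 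Since $\Pi^{-1}(\sigma)$ has mean curvature $H$, the curve $\sigma$ has geodesic curvature $2H$. Finally, $\Sigma_n\to S_\infty$ in $C^k$ near $q_n$ means, translating back up, that $\Sigma$ converges to $\Pi^{-1}(\sigma)$ in $C^k$ as the height tends to $+\infty$; since this holds near every $p\in\gamma$ and the limit curve is locally determined, $\sigma=\gamma$, so $\gamma$ is smooth with geodesic curvature $2H$, and $\Sigma$ converges to $\Pi^{-1}(\gamma)$ in $C^k$ for every $k$.

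For the statement about the side on which $\gamma$ is convex when $H>0$: here I would compare $\Sigma$ near $\gamma$ with the vertical $H$-cylinder $\Pi^{-1}(\gamma)$, whose mean curvature vector points toward the concave side of $\gamma$ in $M^2$. The $C^1$-convergence of $\Sigma$ to $\Pi^{-1}(\gamma)$ forces the mean curvature vector $\vec H$ of $\Sigma$ to be close to that of the cylinder; since $\vec H$ must also be consistent with $\Sigma$ being a graph over $\Omega$ rising to $+\infty$ along $\gamma$, a sign analysis (using the maximum principle / the height estimates in Remark \ref{R.graphes}(\ref{item.estimes}) comparing $\Sigma$ with vertical cylinders over curves of geodesic curvature $2H$ lying in $\Omega$) pins down $\mathrm{sign}\,g(\vec H,\xi)$ in terms of whether $\Omega$ is on the convex or concave side of $\gamma$. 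Concretely: if $g(\vec H,\xi)>0$, then $\Pi^{-1}(\gamma')$ for $\gamma'$ a curve of geodesic curvature $2H$ slightly inside $\Omega$ can be used as a barrier from below, which is only possible if $\gamma$ curves away from $\Omega$, i.e. is convex with respect to $\Omega$; the converse direction is symmetric.

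The main obstacle I expect is making the compactness/limit extraction fully rigorous near the \emph{boundary arc} $\gamma$ rather than at an interior point: one must verify that for $q_n=\varphi_{-t_n}(s(p_n))$ the intrinsic distance $d_{\Sigma_n}(q_n,\partial\Sigma_n)$ genuinely tends to $+\infty$, which is exactly where the hypothesis "$s(p_n)-s_0(p_n)\to+\infty$ for \emph{all} sequences converging to \emph{any} point of $\gamma$" is used — it guarantees that the whole boundary portion of $\Sigma$ over a neighborhood of $\gamma$ has escaped to height $+\infty$, so after translating down, $\partial\Sigma_n$ has left every fixed compact set. A secondary technical point is the bootstrap from $C^{2,\alpha}$ to $C^k$ for all $k$, which is routine elliptic regularity for the (smooth-coefficient, since the ambient metric and the Killing field are smooth) mean curvature equation, and the identification $\varphi_{-a}(\Sigma_n)=\Sigma_{n'}$ up to reindexing, which requires care because the $t_n$ form a fixed sequence — one handles this by noting the limit is independent of the choice of sequence $t_n\to+\infty$ and then testing invariance against the two sequences $(t_n)$ and $(t_n+a)$.
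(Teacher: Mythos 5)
Your overall strategy — translate $\Sigma$ down along the Killing flow, invoke the curvature estimates and Proposition \ref{P.graphe uniforme} to get uniform local graph representations, bootstrap with Schauder to a $C^{k}$ subsequential limit, and identify the limit as a vertical cylinder — is the same as the paper's. But the single step on which the whole proof turns, namely \emph{why the limit surface is a vertical cylinder}, is done differently in your write-up, and your version has a genuine gap.

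You argue cylinder-ness via Killing invariance of the limit: $\varphi_{-a}(\Sigma_n)=\varphi_{-(t_n+a)}(\Sigma)$, so the limit should satisfy $\varphi_{-a}(S_\infty)=S_\infty$. This only follows if the subsequential limit of $\varphi_{-t}(\Sigma)$ as $t\to+\infty$ is \emph{unique}, i.e.\ independent of the sequence $t_n$ and of the subsequence extracted. You acknowledge this ("one handles this by noting the limit is independent of the choice of sequence") but offer no argument, and a priori it is not clear: different sequences $(p_n)\to p$ produce different heights $t_n=s(p_n)-s_0(p_n)$, and a priori different subsequential limits with different (vertical) tangent planes at $s_0(p)$. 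Asserting uniqueness here is essentially assuming what you need to prove — if you already knew the limit was the vertical cylinder over a curve with geodesic curvature $2H$ through $p$ with a prescribed tangent, uniqueness would follow; but you want uniqueness \emph{in order to} conclude it is a cylinder. The argument is circular as written.

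The paper circumvents this entirely with a local maximum-principle argument that makes no uniqueness claim: it first observes that the tangent plane $T_{\tilde p} D_{\tilde p}$ of any subsequential limit must be vertical (because $D_{\tilde p}$ is a $C^2$-limit of vertical graphs over $\Omega$ and $p=\Pi(\tilde p)\in\partial\Omega$, so the projection of $D_{\tilde p}$ must stay in $\overline\Omega$, forcing verticality of $T_{\tilde p}D_{\tilde p}$). Then the Jacobi function $g(N,\xi)$ on $D_{\tilde p}$ — which has a sign since each $\wt D_n$ is a vertical graph, but vanishes at the interior point $\tilde p$ — must vanish identically by the strong maximum principle (Spivak, Vol.~5, Ch.~10). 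Hence $N$ is horizontal everywhere and $D_{\tilde p}$ is a piece of a vertical cylinder. This applies to every subsequential limit separately, no uniqueness needed; uniqueness then \emph{follows} a posteriori (the paper's items (2)–(3)). To repair your proof you should replace the Killing-invariance step with this Jacobi-function argument, or else actually prove the uniqueness of the blow-down limit — the latter seems to require essentially the same ingredients, so the Jacobi route is simpler. The remaining parts of your proposal (the use of the hypothesis "$s(p_n)-s_0(p_n)\to+\infty$ for all $p\in\gamma$" to push $\partial\Sigma_n$ out of compacts, the Schauder bootstrap, and the identification $\wt\alpha\subset\gamma$ via transversality) are in line with the paper.
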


\begin{proof}

Let $p\in \gamma$ and $(p_n)$ a sequence in $\Omega$ converging to $p$. 
Let  $\ov{B(p,\rho)}$ be a compact geodesic disc of $M^2$ 
centered at $p$, with the radius $\rho$ small so that 
$s(\Omega \cap  B(p,\rho))$ is far from  $\partial \Sigma$.

Let $\wt \Sigma_n $ be the $H$-section obtained by translating $\Sigma$ by the
integral curves of $\xi$ so that $s(p_n)$ goes to
$s_0(p_n)=\wt p_n$. After passing to a subsequence, we can assume the tangent
planes $T_{\wt p_n} \wt \Sigma_n$ converge to a 2-plane $P_{\wt p}$
of the tangent space $T_{\wt p} M^3$; here $\wt p= s_0 (p)$.

We deduce from the Main Theorem, Proposition \ref{P.graphe uniforme} 
($H$-sections are stable) and a
continuity
argument that there exist two real numbers $\delta, \delta_0>0$ such that for
$n$ big enough, a part of $\wt \Sigma_n$  is the  
Euclidean graph (in the sense of Remark \ref{R.Graphe euclidien})  of some
function $\wt u_n$ over the disk of $P_{\wt p}$
centered at $\wt p$ with Euclidean radius $\delta$. Furthermore this part of
$\wt \Sigma_n$ contains the geodesic disk, denoted by $\wt D_n$, centered 
at $\wt p_n$ with radius $\delta_0$.

 Since the mean curvature
of each $\wt D_n$ is constant and equal to $H$, the functions $\wt u_n$ satisfy
an elliptic PDE. Therefore, using the Schauder theory,  we can find a
subsequence of the previous subsequence converging for the $C^k$-topology,
for any $k\in \N$, to some 
$H$-surface $D_{\wt p}$ passing 
through $\wt p$ with tangent plane $P_{\wt p}$, 
$T_{\wt p} D_{\wt p}=P_{\wt p}$. Since  $D_{\wt p}$ is the limit of vertical
graph and $p=\Pi (\wt p)$ is a boundary point of $\Omega$, 
the tangent plane at $\wt p$ must be vertical.
Furthermore $D_{\wt p}$ contains the geodesic
disk centered at $\wt p$ with radius $\delta_0/2$. Denoting by $N$ the unit
normal field along $D_{\wt p}$ given by the limit of the unit normal fields
along the disks $\wt D_n$, observe that the function $g(N,\xi)$ is a Jacobi
function on $D_{\wt p}$. As each $\wt D_n$ is a vertical graph, this function
has a sign, but it vanishes at an interior point of $D_{\wt p}$: $\wt p$. 
Therefore, 
the maximum principle, see Spivak \cite[Chapter 10, Addendum 2, Corollary
19]{[Spivak]}
shows that 
$g(N,\xi)$ is the null function. This means that the normal field $N$ is 
horizontal.  Clearly, this implies that the limit surface $D_{\wt p}$ is a part
of the vertical cylinder over some curve $\wt \alpha \subset M^2$. Since $D_{\wt
p}$
has mean curvature $H$ and the fibers are geodesic lines of $M^3$, the curve
$\wt \alpha$ must be smooth and must have geodesic curvature $2H$ in $M^2$.

Since each $\wt D_n$ is, up to a translation along the fibers, part of the
graph $\Sigma$ over $\Omega$, we deduce that:
\begin{enumerate}
\item $\wt \alpha \subset \ov \Omega \subset M^2$.
\item Each converging subsequence of $\wt D_n$ must converge to the same 
$H$-surface $D_{\wt p}$.
\item The limit surface $D_{\wt p}$ doest not depend on the sequence $(p_n)$
of $\Omega$ converging to $p\in \gamma$.
\end{enumerate}


\medskip 

Now we show that we have $\wt \alpha \subset \gamma \subset \partial \Omega$,
that is, the arc $\gamma$ is smooth and has geodesic curvature $2H$.

Assume by contradiction that there is an interior point 
$q\in \wt \alpha \cap \Omega$. Take any point $\wt q \in D_{\wt p}$ in the
fiber over $q$, $\Pi( \wt q)=q$. By construction, $\wt q$ is the limit of some
sequence $(\wt q_n)$ with $\wt q_n \in \wt D_n$. Since the sequence 
 $(\wt D_n)$ converges to (a part of) the vertical cylinder 
$\Pi^{-1} (\wt \alpha)$ with the $C^k$-topology for any $k\in \N$, the surface
$\Sigma$ must be vertical at $s(q)$, contradicting the fact that 
$\Sigma$ is transversal to the fibers, see Remark
\ref{R.graphes}-(\ref{item.transverse}).

The assertion about the convexity of the arc $\gamma$ is obvious.
\end{proof}

\begin{rem}
In the case where $M^3$ is the metric product space $\hd \times \R$ or 
$\sd \times \R$, the result above was shown in an analytic way by 
 Hauswirth,  Rosenberg and  Spruck \cite{[H-R-S]}.

\end{rem}

\begin{rem}\label{R.Estimes de gradient}
 In a Riemannian product $(M^3,g)=(M^2, h) \times \R$, consider a domain 
$\Omega\subset M^2$ and a smooth surface $\Sigma\subset M^3$ which 
is the vertical graph of a function $u$ on $\Omega$. Let $N$ be a unit normal
field on $\Sigma$ and let $\xi=\frac{\partial}{\partial t}$ be the unit
vertical field. Then we have 
\begin{equation*}
\vert  g(N,\xi) \vert =\frac{1}{\sqrt{1 + \vert \nabla_h u \vert ^2 }}.
\end{equation*}
Therefore, bounding $\vert \nabla_h u \vert$ from above is equivalent to
bounding
$ \vert  g(N,\xi) \vert $ from below away from 0.
\end{rem}

As a second application we obtain {\em interior gradient estimates}, see 
Remark \ref{R.Estimes de gradient}, for $H$-sections.

\begin{thm}
 Let $\Pi : (M^3,g) \rightarrow (M^2,h)$ be a Killing submersion.
Let $\Omega \subset M^2$ be a relatively
compact domain and 
$s_0 : \ov\Omega \rightarrow  \Sigma_0$ a $C^0$-section
over $\ov \Omega$.

 Then, for any $C_1,C_2 >0$, there exists a constant 
$\alpha=\alpha (C_1,C_2,\Omega)$ such that 
for any $p\in \Omega$ with $d(p,\partial \Omega)>C_2$ and 
for any $H$-section
$s : \ov\Omega \rightarrow  \Sigma \subset M^3$ over $\ov \Omega$ 
satisfying $\vert s-s_0 \vert <C_1$ on $ \Omega$, we
have 
\begin{equation}
 \big\vert g\big(N, \xi  \big) (s(p))\big\vert > \alpha,
\end{equation}
where $N$ is a unit normal field along $\Sigma$.

\end{thm}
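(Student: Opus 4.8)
The plan is to argue by contradiction along the blow-up/compactness lines of the proofs of the Main Theorem and of Theorem~\ref{T.courbe}. If the statement fails, then there are $C_1,C_2>0$, points $p_n\in\Omega$ with $d(p_n,\partial\Omega)>C_2$, and $H_n$-sections $s_n:\ov\Omega\to\Sigma_n\subset M^3$ with $|s_n-s_0|<C_1$ on $\Omega$, such that $|g(N_n,\xi)(s_n(p_n))|\to 0$. By Remark~\ref{R.graphes}-(\ref{item.transverse}) each $\Sigma_n$ is transversal to the fibers over $\Omega$, so $g(N_n,\xi)$ has no interior zero; choose $N_n$ so that $g(N_n,\xi)>0$ on $s_n(\Omega)$. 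All the $\Sigma_n$ lie in the fixed relatively compact set $\mathcal{K}:=\{q\in M^3:\Pi(q)\in\ov\Omega,\ d_{\mathrm{fiber}}(q,s_0(\Pi(q)))\le C_1\}$, so, passing to a subsequence, $s_n(p_n)\to q^*\in\mathcal{K}$ and $p_n\to p^*\in\ov\Omega$ with $d(p^*,\partial\Omega)\ge C_2$.

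Since $\mathcal{K}$ is relatively compact, $M$ has bounded sectional curvature, $|K|\le\Lambda$, on a relatively compact neighborhood of $\mathcal{K}$; moreover $H$-sections are stable with trivial normal bundle (they are transversal to the Killing field $\xi$ over $\Omega$ by Remark~\ref{R.graphes}-(\ref{item.transverse}), and surfaces transversal to an ambient Killing field are stable). Hence Theorem~\ref{T.local}, applied on such a neighborhood, gives a constant $C'>0$ independent of $n$ with $|A_n(x)|<C'$ for every $x\in\Sigma_n$ such that $d_{M^2}(\Pi(x),\partial\Omega)>C_2/2$: here one uses that $\Pi$ is $1$-Lipschitz and $\partial\Sigma_n=s_n(\partial\Omega)$, so $d_{\Sigma_n}(x,\partial\Sigma_n)\ge d_{M^2}(\Pi(x),\partial\Omega)$. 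In particular $|A_n|<C'$ holds uniformly on a fixed neighborhood of $q^*$ in $\Sigma_n$ and along the entire fiber over $p^*$. Also $H_n\le\frac{1}{\sqrt2}|A_n(s_n(p_n))|\le\frac{C'}{\sqrt2}$, so after a further subsequence $H_n\to H^*<\infty$.

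Now I would run the local-graph plus Schauder argument exactly as in the proof of Theorem~\ref{T.courbe} (the ambient metric being fixed here, so no blow-up of metrics is needed): the tangent planes $T_{s_n(p_n)}\Sigma_n$ converge, along a subsequence, to a plane $P^*\subset T_{q^*}M^3$, and the components of $\Sigma_n$ through $s_n(p_n)$, written as graphs over disks in those planes via Proposition~\ref{P.graph}, converge in the $C^k$-topology for every $k$ to a smooth $H^*$-surface $D^*$ through $q^*$ with $T_{q^*}D^*=P^*$ and $D^*\subset\mathcal{K}$. Since $g(N_n,\xi)(s_n(p_n))\to 0$ and the unit normals converge, $g(N^*,\xi)(q^*)=0$. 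But $g(N^*,\xi)$ is a Jacobi function on $D^*$ (as $\xi$ is Killing), it is $\ge 0$ (a limit of the positive functions $g(N_n,\xi)$), and it vanishes at the interior point $q^*$; by the maximum principle (Spivak~\cite[Chapter 10, Addendum 2, Corollary 19]{[Spivak]}) it is $\equiv 0$ on $D^*$. Hence $\xi$ is everywhere tangent to $D^*$, i.e. $D^*$ is a union of fibers.

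It remains to reach a contradiction, and this is the main obstacle. Since $D^*$ contains the fiber direction at $q^*$, it contains a sub-arc of the unit-speed geodesic fiber over $p^*$ through $q^*$. I would extend $D^*$ along this fiber by finitely many further applications of the local-graph/Schauder argument: every point above $q^*$ on this fiber lies over $p^*$, where $|A_n|<C'$, so each re-centering enlarges $D^*$ along the fiber by a definite amount $\rho=\rho(C')>0$; the new pieces still lie in $\mathcal{K}$, and the Jacobi argument again forces $g(N^*,\xi)\equiv 0$, so the enlarged surface remains a union of fibers (over a curve of geodesic curvature $2H^*$ through $p^*$, staying over $\{d(\cdot,\partial\Omega)>C_2/2\}$). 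After $\lceil 8C_1/\rho\rceil$ steps, $D^*\subset\mathcal{K}$ would contain a fiber-arc over $p^*$ of length $>4C_1$; but $\mathcal{K}$ meets the fiber over $p^*$ in an arc of length only $2C_1$ (the points within fiber-distance $C_1$ of $s_0(p^*)$), a contradiction. What makes this step delicate is precisely that one must organize the iterated vertical extension so as to produce a fiber-arc longer than the slab $\mathcal{K}$ while remaining inside $\mathcal{K}$; this uses that the curvature estimate is available along the whole fiber over $p^*$, together with the a priori confinement of the $H$-sections to $\mathcal{K}$. Everything else is a routine adaptation of the compactness machinery already developed.
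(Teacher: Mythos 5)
Your proposal is correct and follows essentially the same blueprint as the paper's proof: contradiction via a sequence of $H_n$-sections violating the gradient bound, curvature estimates for stable $H$-sections to get uniform local graph structure, Schauder compactness producing a limit $H^\ast$-surface, the Jacobi-function/maximum-principle argument forcing $g(N^\ast,\xi)\equiv 0$ so the limit is a piece of a vertical cylinder, and finally iterated re-centering along the fiber over $p^\ast$ to contradict the uniform height bound $C_1$. The only (harmless) deviations from the paper are that you work directly with the $\Sigma_n$ confined to $\mathcal{K}$ instead of first translating them vertically to pass through $s_0(p_n)$, you bound $H_n$ by the elementary inequality $H_n\le |A_n|/\sqrt{2}$ rather than by comparison with a geodesic sphere, and you invoke Theorem~\ref{T.local} together with Proposition~\ref{P.graph} where the paper uses the Main Theorem with Proposition~\ref{P.graphe uniforme}; for the graph representation in the ambient manifold you do in fact need the uniform harmonic-chart version (Proposition~\ref{P.graphe uniforme} via Remark~\ref{R.Graphe euclidien}), as Proposition~\ref{P.graph} is stated in Euclidean $\R^3$, but this is precisely what the paper supplies.
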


\begin{proof}
Assume by contradiction that there exist positive constants $C_1$ and $C_2$
such that for any $n\in \Ne$ there exists a $H_n$-section 
 $s_n : \ov\Omega \rightarrow  \Sigma_n \subset M^3$ over 
$\ov \Omega$ with $\vert s_n-s_0 \vert <C_1$, 
and there exists a
point $p_n \in \Omega$ such that $d(p_n ,\partial \Omega)>C_2$  and verifying
\begin{equation}\label{Eq.vertical}
 \big\vert g\big(N_n, \xi  \big) (s_n(p_n))\big\vert \leq \frac{1}{n},
\end{equation}
where $N_n$ is the unit normal field along $\Sigma_n$ oriented by $\vec H_n$.

Since $\ov \Omega$ is compact, there exists a subsequence of $(p_n)$ converging
to a point $p\in \Omega$, with $d(p,\partial \Omega)\geq C_2$. For 
each $n\in \Ne$ we denote by $\wt \Sigma_n$ the vertically translated copy of
$\Sigma_n$ passing through $\wt p_n :=s_0 (p_n)$. Observe that the sequence 
$(\wt p_n)$ converges to $\wt p:=s_0 (p)$.

 Since each $H_n$-surface $\wt \Sigma_n$ is a vertical graph, $\wt \Sigma_n$ is
 stable. Thus we can apply our Main Theorem and
Proposition \ref{P.graphe uniforme}. Therefore, there exist positive constants
$\delta$ and $\delta_0$ such that for each $n\in \Ne$ a part $\wt S_n$ of 
$\wt \Sigma_n$  is a Euclidean graph over the disk of
 $T_{\wt p_n} \wt \Sigma_n$ centered at $\wt p_n$ with Euclidean radius
$\delta_0$, furthermore $\wt S_n$ contains the geodesic disk $\wt D_n$ of 
$\wt \Sigma_n$ centered at $\wt p_n$ with radius $\delta$.

As usual, up to choosing a subsequence, we can assume that the sequence 
of tangent planes $(T_{\wt p_n} \wt \Sigma_n)$  converges to a $2$-plane 
$P_{\wt p}\subset T_{\wt p} M^3$ at $\wt p$.

Taking into account (\ref{Eq.vertical}), we deduce that 
$P_{\wt p}$ is vertical. Thus, there exists a positive number
$\delta_0^\prime < \delta_0$ such that 
for $n$ big enough, a part of 
$\wt S_n$ is a Euclidean graph over the disk of
 $P_{\wt p}$ centered at $\wt p$ with Euclidean radius
$\delta_0^\prime$ and this part contains the  geodesic disk  of 
$\wt \Sigma_n$ centered at $\wt p_n$ with radius $\delta/2$.

Observe that the sequence $( H_n )$ is bounded. Indeed, consider the geodesic
sphere $S(\wt p)$ in $M^3$ centered at $\wt p$ with radius $C_2/2$ and  
$H_0 >0$ an upper bound of the absolute mean curvature of $S(\wt p)$.

Using the comparison principle applied to $\Sigma_n$ and a suitable 
translated copy of $S(\wt p)$ along the fibers, we get 
$\vert H_n \vert \leq H_0$. Up to taking a subsequence, we can assume that 
$H_n \to H$.

As in the proof of the Main Theorem, using the Schauder theory, we can prove
that a subsequence of $(\wt S_n)$ converges in the $C^k$-topology, for 
any $k\in \N$, to a
$H$-surface $\wt S$ passing through $\wt p$ with vertical tangent plane 
$P_{\wt p}=T_{\wt p}\wt S$. As in the proof of Theorem \ref{T.courbe}, the
maximum principle, see Spivak \cite[Chapter 10, Addendum 2, Corollary
19]{[Spivak]}, shows
that $g(N,\xi)$ is the null function on $\wt S$, where $N$ is the limit unit
normal field on $\wt S$. This means that $\wt S$ is part of a
vertical $H$-cylinder over
some curve 
$\wt \gamma \subset \Omega$, that is, $\wt S \subset \Pi^{-1}(\wt \gamma)$.
Since $\wt S$ is a $H$-surface, $\wt \gamma$ is a smooth curve with 
geodesic curvature $2H$. Finally, $\wt S$ contains the geodesic disk of 
$\Pi^{-1}(\wt \gamma)$ centered at $\wt p$ with radius $\delta/2$. 

Let us call $\wt q \in \Pi^{-1}(p) \subset M^3$ the point in the
same fiber 
than $\wt p$ with vertical distance $\delta/4$, lying over 
 $\wt p$. Observe that, by construction, $\wt q$ is the limit of some sequence
$(\wt q_n)$, with $ \wt q_n \in \wt D_n \subset \wt \Sigma_n$. 
 Observe that $\Pi (\wt q)\in \Omega$ and 
$d(\Pi (\wt q),\partial \Omega)\geq C_2$ since $\Pi (\wt q)=p$. Therefore,
the same arguments used above show that the geodesic disk of the
vertical $H$-cylinder 
$\Pi^{-1}(\wt \gamma)$ centered at $\wt q$ with radius $\delta/2$ is the limit 
of a sequence $( \Sigma_n^\prime)$, $\Sigma_n^\prime \subset \wt \Sigma_n$,
extending in this way 
the part $\wt S$ of $\Pi^{-1}(\wt \gamma)$. 

Repeating this argument, we can show that a connected part  of the
vertical $H$-cylinder 
$ \Pi^{-1}(\wt \gamma)$, which is as high as we want,
is contained in the limit
set of the sequence $(\wt \Sigma_n)$. This gives a contradiction since
the vertical distance between $\Sigma_0$ and $\Sigma_n$ is 
uniformly bounded by $C_1$ by hypothesis.
\end{proof}

\section{Appendix}


\begin{prop}\label{P.geometrie}
 Let $U\subset \R^3$ be an open set and let $S\subset U$ be an immersed
$C^2$-surface without boundary. Let $g$ be a metric on $U$, denote by $g_{euc}$
the Euclidean metric. Let  respectively $A$ and $\ov A$ be the second
fundamental forms of $S$ for the metrics $g_{euc}$ and $g$. Assume that 
$\vert \ov A \vert <C$ for some constant $C>0$. 

Then, for any $n\in \Ne$, there exists a constant $C_n>0$ which does not depend
on $S$ such that if $\Vert g_{ij}-g_{euc, ij} \Vert_{C^1(U)} < C_n$, 
$1\leq i, j \leq 3$, then for any $p\in S$ and any nonzero tangent vector
$v\in T_pS$  we have:
\begin{equation}
  \vert \ov\lambda_p (v) - \lambda_p (v)\vert < \frac{1}{n}\quad \text{and}
\quad \vert H- \ov H \vert (p) <\frac{1}{n},
\end{equation}
where $\lambda_p (v)$, resp. $\ov\lambda_p (v)$, denotes the normal
curvature of $S$ at $p$ in the tangent direction $v$ with respect to the metric
$g_{euc}$, resp. $g$, and $H$, resp. $\ov H$, denotes the mean curvature of $S$
at $p$ for the metric
$g_{euc}$, resp. $g$, $($the curvatures of both surfaces being computed with
respect to normals inducing the same transversal orientation$)$.

Consequently, for any $n\in \Ne$, there exists a constant $C_n^\prime>0$ which
does not depend on $S$ such that if 
$\Vert g_{ij}-g_{euc, ij} \Vert_{C^1(U)} <C_n^\prime$, 
$1\leq i, j \leq 3$, then we have:
\begin{equation}\label{Eq.bornee}
\Big\vert  \vert A \vert - \vert \ov A \vert \Big\vert <\frac{1}{n} .
\end{equation}
\end{prop}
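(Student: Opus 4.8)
The plan is to reduce everything to a pointwise comparison at a single point $p \in S$, by working in a $g_{euc}$-orthonormal basis of $T_pS$ together with the $g_{euc}$-unit normal, and then tracking how the shape operator changes when one replaces $g_{euc}$ by a nearby metric $g$. First I would fix $p \in S$ and a tangent direction $v$; the normal curvature $\lambda_p(v)$ for $g_{euc}$ is $\mathrm{II}(v,v)/|v|^2$, which depends only on the $1$-jet of the metric (through the Christoffel symbols, or equivalently through $\langle \bar\nabla_v v, N\rangle$) and the $1$-jet of the immersion at $p$. The key algebraic observation is that, once we have the $C^0$-control $\|g_{ij}-\delta_{ij}\|_{C^0(U)} < C_n$, the $g$-unit normal $\bar N$ at $p$, the $g$-induced metric on $T_pS$, and its inverse, all differ from their Euclidean counterparts by quantities that go to $0$ with $C_n$; and with the $C^1$-control the Christoffel symbols $\Gamma^k_{ij}$ of $g$ differ from $0$ (the Euclidean Christoffel symbols) by quantities controlled by $C_n$. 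Hence $\bar\lambda_p(v) = \bar{\mathrm{II}}(v,v)/\bar g(v,v)$ can be expanded as a smooth (rational) function of the $g_{ij}(p)$, the $\partial_k g_{ij}(p)$, and the $1$-jet of the immersion at $p$, which reduces to $\lambda_p(v)$ when $g = g_{euc}$.

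The main point that needs care is that this comparison must be \emph{uniform over all surfaces $S$}: a priori the second fundamental form of $S$ for $g_{euc}$ is not bounded, only $|\bar A| < C$ is assumed. So I would \emph{not} try to bound $\bar\lambda_p(v)-\lambda_p(v)$ by something times $|A|$; instead I would express $\lambda_p(v)$ itself in terms of $\bar\lambda_p(v)$ and the metric data. Concretely, at $p$ one has $\bar\nabla_v v = \nabla^{euc}_v v + (\text{terms involving }\Gamma(g))$, so $\langle \bar\nabla_v v, \bar N\rangle_g$ and $\langle \nabla^{euc}_v v, N\rangle_{euc}$ differ by: (i) the difference between $\bar N$ and $N$, which is $O(C_n)$ in $C^0$ and multiplies the \emph{bounded} quantity $\bar{\mathrm{II}}(v,v)$; (ii) the $\Gamma(g)$ terms, which are $O(C_n)$ and multiply $|v|^2$, hence bounded after normalizing $|v|=1$; (iii) the difference between $g$ and $g_{euc}$ in the ambient inner products, again $O(C_n)$ times bounded quantities. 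Since $|\bar A| < C$ bounds $|\bar{\mathrm{II}}(v,v)|/\bar g(v,v) \le C$, every term on the right is a fixed constant (depending on $C$) times $C_n$ plus higher order. Choosing $C_n$ small enough, depending only on $C$ and $n$, forces $|\bar\lambda_p(v)-\lambda_p(v)| < 1/n$; averaging over two $g_{euc}$-orthogonal directions gives the mean curvature estimate $|H-\bar H|(p) < 1/n$ after possibly shrinking $C_n$ again.

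For the last assertion, recall $|A|^2 = \lambda_1^2 + \lambda_2^2$ where $\lambda_1,\lambda_2$ are the principal curvatures for $g_{euc}$, and similarly for $\bar A$. From $|\bar A| < C$ and the uniform estimate just proved, each $g_{euc}$-principal curvature satisfies $|\lambda_i| \le C + 1$ (taking $n=1$, say), so the $\lambda_i$ live in a fixed compact set; on that set $(\lambda_1,\lambda_2) \mapsto \sqrt{\lambda_1^2+\lambda_2^2}$ is uniformly Lipschitz. Since the eigenvalues of the shape operator depend continuously — indeed, in this finite-dimensional setting, with a uniform modulus on the relevant compact set — on the shape operator, and the shape operator for $g$ differs from that for $g_{euc}$ by $O(C_n)$ (uniformly, by the argument above, because the entries are controlled), one gets $\big||A|-|\bar A|\big| < 1/n$ once $\|g_{ij}-g_{euc,ij}\|_{C^1(U)} < C_n'$ for a suitable $C_n'$ depending only on $C$ and $n$; this $C_n'$ does not depend on $S$. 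The hard part is really bookkeeping: making sure that at every step the ``bounded'' factors are bounded by a constant depending only on $C$ (and not on $S$), so that the smallness needed from $C_n$ is uniform — once that is set up, the estimate is a routine perturbation argument for the shape operator under a $C^1$-small change of the ambient metric.
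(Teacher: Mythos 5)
Your proposal is correct and follows essentially the same route as the paper: fix $p$ and $v$, work in coordinates adapted to $T_pS$ and the Euclidean normal, express $\bar\lambda_p(v)$ as a rational function of the $1$-jet of $g$ at $p$ together with the second-order data of $S$, and then use $\vert \bar A\vert < C$ (rather than any bound on $A$) to control the a priori unbounded second-derivative term before estimating the difference by $O(C_n)$ times $C$-bounded quantities. Two minor slips that do not affect the argument: the normal curvature depends on the $2$-jet of the immersion, not the $1$-jet as you first write (you implicitly correct this later by isolating $u_{11}$-type terms); and the mean curvature comparison is cleaner via principal curvatures (since $\lambda_p(v)\mapsto \bar\lambda_p(v)$ is uniformly $1/n$-close over all $v$, the maxima and minima are $1/n$-close) than via averaging over a $g_{euc}$-orthonormal frame, because $\bar H$ is a $g$-average rather than a $g_{euc}$-average.
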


\begin{proof}
Let us denote the normal curvatures by $\lambda (v)$ and 
$\ov \lambda (v)$. 

Observe that if we have $\Vert g_{ij}-g_{euc, ij} \Vert_{C^1(U)} <C$ for some
constant $C>0$ then, for any Euclidean change of coordinates we certainly have 
$\Vert g_{ij}-g_{euc, ij} \Vert_{C^1(U)} <9C$ in the new coordinates.

We can choose Euclidean coordinates 
$(x_1,x_2,x_3)$ of $\R^3$ such that the origin coincides with $p$,
the tangent plane $T_p S$ coincides with the plane $\{ x_3=0\}$ and
 $v$ is tangent to the $x_1$-axis. 
Thus, a
neighborhood $S_p$ of $p$ in $S$ is the graph of a function $x_3=u(x_1,x_2)$
defined 
in a neighborhood $V$ of the origin in the plane $\{ x_3=0\}$. 
Therefore a parametrization of $S_p$ is given by 
$(x_1,x_2) \mapsto F(x_1,x_2)=(x_1,x_2,u(x_1,x_2))$, 
$(x_1,x_2) \in V \subset \{ x_3=0\} $.

Let us set $F_i=\frac{\partial F}{\partial x_i}$, 
$u_i=\frac{\partial u}{\partial x_i}$, $i=1,2$. We have
\begin{equation}\label{Eq.tangent1}
 u_i(0)=0,\quad i=1,2.
\end{equation}

We denote by
$\ov N$ the Gauss map of $S$ for the metric $g$ and by $N$ the Gauss map of $S$
for the metric $g_{euc}$, both oriented so that at 0 we have 
$N(0)=(0,0,1)$ and $g_{euc}(N,\ov N)(0)>0$. 

We have by definition: 
\begin{equation*}
 \ov \lambda (v)= \frac{g(\ov N, \ov\nabla _{F_1}F_1)}{g(F_1,F_1)}(0)\quad
\text{and} \quad \lambda (v)= \frac{u_{11}}{1+u_1^2}(0),
\end{equation*}
where $\ov \nabla$ stands for the Levi-Civita connection with respect to the
metric $g$.

Using (\ref{Eq.tangent1}) we get $F_1(0)= \partial_{x_1}$. Furthermore a
straightforward computation shows that: 
\begin{equation*}
 \ov N(0)=\frac{1}{\sqrt{g^{33}}}
\big(g^{13}\partial_{x_1} + g^{23}\partial_{x_2} +
g^{33}\partial_{x_3}\big)(0)\quad 
\text{and}\quad
\big( \ov\nabla_{F_1}
F_1\big)(0)=\big(\ov \nabla_{\partial_{x_1}}\partial_{x_1}\big)
(0) + u_{11}(0)\partial_{x_3},
\end{equation*}
Therefore we deduce that
\begin{align}
 \ov \lambda (v) &=\frac{1}{g_{11}\sqrt{g^{33}}}\Big(
g(\ov \nabla_{\partial_{x_1}}\partial_{x_1}, g^{13}\partial_{x_1} +
g^{23}\partial_{x_2} +
g^{33}\partial_{x_3})(0) + u_{11}(0) \Big),\label{Formule1}\\
 \lambda (v) &= u_{11}(0).\label{Formule2}
\end{align}
Observe that  if the
metrics $g$ and $g_{euc}$ are close enough on $U$ in the $C^1$-topology 
then the expression $ g(\ov \nabla_{\partial_{x_1}}\partial_{x_1},
g^{13}\partial_{x_1} +
g^{23}\partial_{x_2} +
g^{33}\partial_{x_3})(0)$ is as close to 0 as we want
and the expression 
 $\frac{1}{g_{11}\sqrt{g^{33}}}$ 
is as close to 1 as we want.
 Observe that these
expressions are independant of $u$, that is, independant of the surface $S$.
By hypothesis, we have $\vert \ov \lambda (v) \vert <C$.
Therefore there exists a constant $D_n>0$ which does not depend on $S$  
such that $\Vert g_{ij}-g_{euc, ij} \Vert_{C^1(U)} < D_n$, 
$1\leq i, j \leq 3$, implies that $\vert u_{11}(0) \vert <2C$.
Consequently we have:
\begin{equation*}
 \vert \ov \lambda (v)- \lambda (v) \vert  <
2C \Big \vert 1- \frac{1}{g_{11}\sqrt{g^{33}}}\Big\vert
+ \frac{1}{g_{11}\sqrt{g^{33}}}
 \Big\vert g(\ov \nabla_{\partial_{x_1}}\partial_{x_1},
g^{13}\partial_{x_1} +
g^{23}\partial_{x_2} +
g^{33}\partial_{x_3})(0) \Big\vert.
\end{equation*}
For the same reasons as before, there exists a constant $C_n>0$, with
$C_n<D_n$, which does not depend on $S$ such that
$\Vert g_{ij}-g_{euc, ij} \Vert_{C^1(U)} < C_n$, 
$1\leq i, j \leq 3$, implies that
\begin{equation*}
  \vert \ov \lambda (v)- \lambda (v) \vert  < \frac{1}{n}.
\end{equation*}
Let us denote by $\ov \lambda_{\text{max}}, \ov \lambda_{\text{min}},
\lambda_{\text{max}}$ and $\lambda_{\text{min}}$ the principal curvatures of $S$
at $P$ for the metrics $g$ and $g_{euc}$ respectively. It follows that:
\begin{equation*}
 \vert \ov \lambda_{\text{max}}- \lambda_{\text{max}} \vert  < \frac{1}{n}
\qquad  \text{and} \qquad
 \vert \ov \lambda_{\text{min}}- \lambda_{\text{min}} \vert  < \frac{1}{n}.
\end{equation*}
Therefore:
\begin{equation*}
 \vert H- \ov H \vert <\frac{1}{n}.
\end{equation*}

Let $k>0$ be some integer and let $C_k$ be the  constant given in the first
part of the proposition. Thus, if  
$\Vert g_{ij}-g_{euc, ij} \Vert_{C^1(U)} < C_k$, 
$1\leq i, j \leq 3$, then we have:
\begin{equation*}
  \vert \ov \lambda_{\text{max}}- \lambda_{\text{max}} \vert  < \frac{1}{k}
\quad \text{and} \quad
 \vert \ov \lambda_{\text{min}}- \lambda_{\text{min}} \vert  < \frac{1}{k}
\end{equation*}
on $S$. Therefore we get
\begin{align*}
 \Big\vert  \vert A \vert^2 - \vert \ov A \vert ^2\Big\vert &=
 \Big\vert \big( \lambda_{\text{max}}^2 +  \lambda_{\text{min}}^2\big) - 
\big( \ov  \lambda_{\text{max}}^2 +  \ov\lambda_{\text{min}}^2\big)
\Big\vert \\
 & \leq \frac{1}{k}\big( 2 \vert \ov\lambda_{\text{max}}\vert +\frac{1}{k}
 +2 \vert \ov\lambda_{\text{min}}\vert +\frac{1}{k}\big) \\
 & < \frac{1}{k}\big( 4C +\frac{2}{k}\big).
\end{align*}
To conclude the proof just observe that 
$\Big\vert \vert A \vert - \vert \ov A\vert \Big\vert ^2\leq 
 \Big\vert  \vert A \vert^2 - \vert \ov A \vert ^2\Big\vert$.
\end{proof}

\begin{rem}\label{R.Graphe euclidien}
 Let $(M,g)$ be a smooth Riemannian 3-manifold and let 
$S \looparrowright M$ be an immersed surface. Let $p\in S$ be an interior point
of $S$. Let $(U,\varphi,B(p,r_0))$ be a harmonic coordinate chart at $p$, where 
$B(p,r_0)$ is the geodesic ball in $M$ centered at $p$ with radius $r_0$ such
that $B(p,r_0) \cap \partial S =\emptyset$. Let us denote the harmonic
coordinates on U by $(x_1,x_2,x_3)$ and let us consider the Euclidean
 metric $dx_1^2 + dx_2^2 + dx_3^2$  on $U$. Viewing the connected component of
$B(p,r_0) \cap  S$ through $p$ in $U$ observe that a neighborhood of 
$p$ in $S$ is a Euclidean graph over a Euclidean disk in the tangent plane
$T_pS$ centered at $p$.
\end{rem}

\bigskip

The arguments of the proof of the Proposition \ref{P.geometrie} can be used to
show the following result which is very useful for the applications.

\begin{prop}\label{P.graphe uniforme}
Let $(M,g)$ be a smooth Riemannian 3-manifold 
$($not necessarily complete$)$, 
with
bounded sectional curvature 
$\vert K \vert \leq \Lambda < +\infty$ and let  $r>0$. Let
$\Omega\subset M$ be an open subset of $M$ such that  
the injectivity radius in $M$ at any $x\in \Omega$ is $\geq r $. 

Then  for any $C_1>0$ and  $C_2>0$ there exist constants 
$\delta, \delta_0 >0$ depending only on 
$C_1,C_2,\Lambda$ and $r$ and neither on $M$ nor on $\Omega$ satisfying the
following:

For any immersed surface $S \looparrowright \Omega$ whose 
second fundamental form $\ov A$ satisfies $\vert \ov A\vert <C_1$  and for
any $p\in S$ such that $d_S (p,\partial S)>C_2$ then 
a part $S_0$ of $S$ is contained in the image of a harmonic chart and 
is a Euclidean graph $($see Remark $\ref{R.Graphe euclidien})$
over the disk of 
$T_pS$ centered at $p$ with Euclidean radius $\delta$. Furthermore,
the subset $S_0$ contains the
geodesic disk of $S$ centered at $p$ with radius $\delta_0$.
\end{prop}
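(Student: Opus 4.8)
The plan is to transplant $S$ into a harmonic coordinate chart around $p$, where the ambient metric is $C^{1,\alpha}$-controlled, and then invoke the purely Euclidean graph estimate of Proposition~\ref{P.graph}. The bridge between the two is the elementary second fundamental form computation already carried out in the proof of Proposition~\ref{P.geometrie}, used here only to transfer a \emph{bound} on the second fundamental form (not a comparison of the type needed there), so that no blow-up is required.

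First I would produce the chart. Since $\mathrm{inj}(y)\geq\mathrm{inj}(x)-d(x,y)$ for all $x,y$, the hypothesis $\mathrm{inj}\geq r$ on $\Omega$ forces $\mathrm{inj}\geq r/2$ on $\Omega(r/2)$; hence Theorem~\ref{T.cartes harmoniques}, applied with $i=\delta=r/2$ and a fixed $\alpha\in(0,1)$, yields constants $Q_0>1$ and $r_0>0$ depending only on $r$ and $\Lambda$, and for each point of $\Omega$ a harmonic chart in which, with Euclidean coordinates $(x_1,x_2,x_3)$ on the image, one has $Q_0^{-1}\delta_{ij}\leq g_{ij}\leq Q_0\delta_{ij}$ and $|\partial_k g_{ij}|\leq Q_0$ on an open set containing a fixed Euclidean ball $B_e(0,r_1)$, with $p$ at the origin; here $r_1=r_1(r,\Lambda)>0$. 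Let $\tilde S$ be the connected component through the origin of $S\cap B_e(0,r_1)$, viewed as an immersed surface in $(\R^3,g_{euc})$.

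The key point is that $\tilde S$ has a bounded \emph{Euclidean} second fundamental form. For tangent vectors $X,Y$ at a point $q\in\tilde S$, writing $\nabla^g_XY=\partial_XY+\Gamma(X,Y)$, where $\Gamma$ collects the Christoffel symbols of $g$, and splitting $\nabla^g_XY$ $g$-orthogonally into its $T_q\tilde S$-component and its $\ov N$-component ($\ov N$ the $g$-unit normal), one finds
\begin{equation*}
A_{euc}(X,Y)=\langle\partial_XY,N\rangle_{euc}=\ov A(X,Y)\,\langle\ov N,N\rangle_{euc}-\langle\Gamma(X,Y),N\rangle_{euc},
\end{equation*}
because the $T_q\tilde S$-component of $\nabla^g_XY$ is automatically $g_{euc}$-orthogonal to the Euclidean unit normal $N$. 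From $g\geq Q_0^{-1}g_{euc}$ and $g\leq Q_0 g_{euc}$ one gets $|\ov N|_{euc}\leq\sqrt{Q_0}$ and $|X|_{euc}\leq\sqrt{Q_0}$ for $g$-unit $X$, while $|\partial g|\leq Q_0$ bounds $|\Gamma|$ by a power of $Q_0$; since $|\ov A|<C_1$, this yields a uniform bound $|A_{euc}|<C_1'$ on all of $\tilde S$, with $C_1'=C_1'(C_1,Q_0)$, hence depending only on $C_1,r,\Lambda$. (This is exactly the computation in the proof of Proposition~\ref{P.geometrie}, with the hypothesis of $C^1$-closeness to $g_{euc}$ weakened to $C^1$-control.) Next I would control the intrinsic size: $\partial\tilde S\subset\partial S\cup\partial B_e(0,r_1)$; a point of $\partial S$ is at $g$-distance $>C_2$ from $p$ along $S\supset\tilde S$, while a point of $\partial B_e(0,r_1)$ is reached from the origin only by curves of Euclidean length $\geq r_1$, hence of $g$-length $\geq Q_0^{-1/2}r_1$. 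Comparing $g$ and $g_{euc}$ once more, $d_{\tilde S,euc}(p,\partial\tilde S)\geq C_2':=Q_0^{-1/2}\min\{C_2,Q_0^{-1/2}r_1\}>0$, depending only on $C_2,r,\Lambda$.

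Now set $\rho:=\frac12\min\{1/(4C_1'),\,C_2'/4\}$ and apply Proposition~\ref{P.graph} to the Euclidean surface $\tilde S$ with its parameter equal to $\rho$: since $|A_{euc}|<1/(4\rho)$ on $\tilde S$ and $d_{\tilde S,euc}(p,\partial\tilde S)>4\rho$, a neighborhood $S_0$ of $p$ in $\tilde S$ is a Euclidean graph of a function $u$ with $|\nabla u|<1$ over the disk of $T_pS$ of Euclidean radius $\delta:=\sqrt2\,\rho$, a quantity depending only on $C_1,C_2,r,\Lambda$; by construction $S_0$ lies in the image of the harmonic chart, so this is the graph assertion in the sense of Remark~\ref{R.Graphe euclidien}. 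Finally, the graph map $F$ being distance nondecreasing for $g_{euc}$ (one has $|dF(v)|\geq|v|$), $S_0$ contains every point that can be joined to $p$ inside $\tilde S$ by a Euclidean curve of length $<\delta$. Hence, putting $\delta_0:=\frac12\min\{Q_0^{-1/2}r_1,\,\delta/\sqrt{Q_0}\}$: if $q\in S$ with $d_{S,g}(p,q)<\delta_0$, a minimizing $g$-curve from $p$ to $q$ has Euclidean length $<\sqrt{Q_0}\,\delta_0\leq r_1$, so it stays in $B_e(0,r_1)$ and therefore in $\tilde S$, and its Euclidean length is $<\delta$; thus $q\in S_0$. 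So $S_0$ contains the geodesic disk of $S$ centered at $p$ of radius $\delta_0$, which again depends only on $C_1,C_2,r,\Lambda$. The one delicate step is the displayed identity: recognizing that the $g$-tangential part of $\nabla^g_XY$ does not enter the Euclidean second fundamental form is what makes a mere $C^1$-control of the ambient metric sufficient, where Proposition~\ref{P.geometrie} needed $C^1$-closeness to a flat metric.
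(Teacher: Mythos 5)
Your proof is correct and takes essentially the same route as the paper: pass to a harmonic coordinate chart, bound the Euclidean second fundamental form in terms of the ambient one using only $C^1$-control of the metric (the key point that the $g$-tangential part of $\nabla^g_XY$ drops out, which the paper extracts from the coordinate formulae (\ref{Formule1})--(\ref{Formule2}) and you derive invariantly), and then invoke Proposition~\ref{P.graph}. Your bookkeeping is a bit more careful than the paper's terse version --- in particular, taking $\rho$ and $\delta_0$ as minima involving $C_2$ and the chart radius $r_1$, so that the hypothesis $d_\Sigma(p,\partial\Sigma)>4\rho$ of Proposition~\ref{P.graph} is actually verified --- but the core argument is identical.
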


\begin{proof}
Let us fix some constants $\alpha \in (0,1)$ and $Q_0 >1$.
The Theorem 6 in Hebey-Herzlich \cite{[Heb-Her]} shows that there exists some 
$r_0=r_0(Q_0,\alpha,r, \Lambda   ) >0$ such
that 
for any $x\in \Omega$ there exists a harmonic coordinate chart
$(U,\varphi,B(x,r_0))$ satisfying the assumption of the Theorem
\ref{T.cartes harmoniques} where
$B(x,r_0)$ is the geodesic ball in $M$ centered at $x$ with radius $r_0$.

 Let $C_1,C_2$, $S$ and   $p\in S$ be as stated. Consider a harmonic 
coordinate chart
$(U,\varphi,B(p,r_0))$ at $p$, and call $x=(x_1,x_2,x_3)$ the coordinates on 
$U\subset \R^3$, we have 
\begin{equation*}
 \sum_{k=1}^3\sup _{x\in U} \vert \partial_k g_{ij}(x) \vert \leq Q_0 
\quad \text{and} \quad   Q_0^{-1} \delta_{ij} \leq g_{ij} \leq Q_0 \delta_{ij}
\quad \text{as
quadratic forms}.
\end{equation*}
We want to show that there exists a constant 
$C_3=C_3(C_1)$ which does not depend on $p\in S$ or on $S$
such that $\vert A (p) \vert <C_3$ where $A$ denotes the second fundamental
form 
of $S \cap B(p,r_0)$ for the Euclidean metric $dx_1^2 + dx_2^2 + dx_3^2$.
 Let $v \in T_pS$ be a nonzero tangent vector. Choose Euclidean coordinates
$y=(y_1,y_2,y_3)$ in $U$ such that the tangent plane $T_pS$ coincides
with the plane $\{y_3=0\}$ and $v$ is tangent to the $y_1$-axis.
In those new coordinates we have 
\begin{equation}\label{Formules}
 \sum_{k=1}^3\sup _{y\in U} \vert \partial_k g_{ij}(y) \vert \leq 9 Q_0 
\quad \text{and} \quad   Q_0^{-1} \delta_{ij} \leq g_{ij} \leq Q_0 \delta_{ij}
\quad \text{as
quadratic forms}.
\end{equation}
where $\partial_k$ stands for the partial derivative with respect to $y_k$.

Let us denote by $\lambda_p (v)$, resp. $\ov\lambda_p (v)$, the normal
curvature of $S$ at $p$ in the tangent direction $v$ with respect to the
Euclidean metric
$g_{euc}$, resp. $g$. We 
get from the formulae (\ref{Formule1}) and (\ref{Formule2}) established in the
proof of the Proposition \ref{P.geometrie} that 
\begin{equation*}
\lambda_p (v) =
(g_{11}\sqrt{g^{33}})(0) \ov  \lambda (v)- 
g(\ov \nabla_{\partial_{y_1}}\partial_{y_1}, g^{13}\partial_{y_1} +
g^{23}\partial_{y_2} +
g^{33}\partial_{y_3})(0).
\end{equation*}
Taking into account the formulae (\ref{Formules}) we get that the
expressions
$(g_{11}\sqrt{g^{33}})(0)$ and 
$\vert g(\ov \nabla_{\partial_{y_1}}\partial_{y_1}, g^{13}\partial_{y_1} +
g^{23}\partial_{y_2} +
g^{33}\partial_{y_3})(0) \vert$ are bounded
by a constant $Q_1$ which only depends on $Q_0$. Therefore the 
expression $\vert \lambda_p (v) \vert $ is bounded 
by a constant $C_4$ which depends only on $C_1$ and $Q_0$ and which does not
depend on $p\in S$  or on $S$. Consequently we have 
$\vert A\vert <2C_4$ on $S$.
 Now the proof
follows from the Proposition \ref{P.graph} setting 
$\delta =1/8C_4$. The second part of the estimates (\ref{Formules}) shows that
we
can choose $\delta_0=\delta/Q_0$.

\end{proof}

\end{document}